\documentclass[11pt]{article}
\usepackage{amsmath,amsfonts,amsthm,amssymb,mathtools}
\usepackage{mathrsfs,graphicx,color,latexsym,tikz,calc}
\usepackage{tikz-cd}
\usepackage{epstopdf}
\usepackage{enumerate}
\usepackage{caption}
\IfFileExists{stmaryrd.sty}{\usepackage{stmaryrd}}{}
\usepackage{xcolor}
\usepackage{hyperref}
\hypersetup{
    colorlinks=true,
    citecolor=blue,
    linkcolor=blue,
    urlcolor=blue
}

\usetikzlibrary{shadows}
\usetikzlibrary{patterns,arrows,decorations.pathreplacing}

\usepackage[margin=2.5cm]{geometry}
\IfFileExists{ulem.sty}{\usepackage{ulem}}{}
\allowdisplaybreaks[4]

\newtheorem{theorem}{Theorem}[section]
\newtheorem{lemma}[theorem]{Lemma}

\newtheorem{conjecture}[theorem]{Conjecture}

\title{\Large\bfseries The Keevash--Mubayi simplex-cluster conjecture%
\thanks{Lihua Feng was supported by the NSFC (Nos. 12271527 and 12471022)
and NSF of Qinghai Province (No. 2025-ZJ-902T).
E-mail addresses: \url{wuyjmath@163.com} (Y. Wu),
\url{fenglh@163.com} (L. Feng).}}
\author{
{\small Yongjiang Wu,\quad Lihua Feng\thanks{Corresponding author.}}\\[2mm]
\small School of Mathematics and Statistics, HNP-LAMA,
Central South University\\
\small Changsha, Hunan, 410083, China
}
\date{\today}

\begin{document}
\maketitle

\begin{abstract}
A $d$-simplex-cluster is a collection of $d+1$ distinct
$k$-element sets with empty total intersection, nonempty
intersection for every $d$ members, and union of size at most $2k$.
We prove the simplex-cluster conjecture of Keevash and Mubayi,
a common strengthening of the Erd\H{o}s--Chv\'atal simplex
conjecture and Mubayi's cluster conjecture.
More precisely, for integers $k>d\ge2$ and $n\ge k(d+1)/d$,
every family of $k$-element subsets of an $n$-element set
containing no $d$-simplex-cluster has at most
$\binom{n-1}{k-1}$ members.
Equality holds if and only if the family consists of all
$k$-element subsets containing a fixed point.
\end{abstract}

\noindent\hspace*{1.5em}\textbf{AMS Classification:} 05D05; 05C65\\
\hspace*{1.5em}\textbf{Keywords:} Simplex-clusters;
Keevash--Mubayi conjecture; Erd\H{o}s--Ko--Rado theorem;
Spectral methods.

\section{Introduction}\label{sec:introduction}

For integers $n\ge1$ and $k\ge0$, write $[n]=\{1,\ldots,n\}$
and let $\binom{[n]}{k}$ denote the family of all $k$-element
subsets of $[n]$.
A family $\mathcal F$ of subsets of $[n]$ is
\textit{$k$-uniform} if every member has $k$ elements,
and \textit{intersecting} if $A\cap B\ne\emptyset$
for all $A,B\in\mathcal F$.
For $k\ge1$, the family of all $k$-element subsets of $[n]$
containing a fixed point is called a \textit{full star}.
It is intersecting and has $\binom{n-1}{k-1}$ members.
The Erd\H{o}s--Ko--Rado theorem~\cite{EKR} states that,
for $n\ge2k\geq 2$, every intersecting family
$\mathcal F\subseteq\binom{[n]}{k}$ satisfies
\[
 |\mathcal F|\le\binom{n-1}{k-1}.
\]
If $n>2k$, equality holds if and only if $\mathcal F$ is a full star.
Erd\H{o}s~\cite{Erdos1971} asked whether the same bound
holds for families containing no \textit{triangle},
a collection of three distinct sets with empty total
intersection but nonempty pairwise intersections.
Chv\'atal~\cite{Chvatal} generalized this question to
\textit{$d$-simplices}, collections $\{A_1,\ldots,A_{d+1}\}$
of $d+1$ distinct sets satisfying
\[
 \bigcap_{i=1}^{d+1}A_i=\emptyset
 \qquad\text{and}\qquad
 \bigcap_{i\in[d+1],i\ne j}A_i\ne\emptyset
 \quad\text{for every }j\in[d+1].
\]
The Erd\H{o}s--Chv\'atal simplex
conjecture~\cite{Chvatal,KM} asserts that if
$k>d\ge1$ and $n>k(d+1)/d$, then every family
$\mathcal F\subseteq\binom{[n]}{k}$ containing no
$d$-simplex satisfies
\[
 |\mathcal F|\le\binom{n-1}{k-1},
\]
with equality if and only if $\mathcal F$ is a full star.
The case $d=1$ follows from the Erd\H{o}s--Ko--Rado
theorem, since a $1$-simplex is a pair of nonempty
disjoint sets.

A related configuration is a \textit{$d$-cluster},
a collection
$\{A_1,\ldots,A_{d+1}\}\subseteq\binom{[n]}{k}$
of $d+1$ distinct sets satisfying
\[
 \bigcap_{i=1}^{d+1}A_i=\emptyset
 \qquad\text{and}\qquad
 \left|\bigcup_{i=1}^{d+1}A_i\right|\le2k.
\]
For $d=1$, the forbidden collection is a pair of disjoint
$k$-element sets, so the Erd\H{o}s--Ko--Rado theorem
determines the maximum size of a family containing
no $1$-cluster.
Mubayi~\cite{Mubayi,KM} proved the corresponding result
for $d=2$ and conjectured that, for $k>d\ge1$
and $n>k(d+1)/d$, every family
$\mathcal F\subseteq\binom{[n]}{k}$ containing no
$d$-cluster satisfies
\[
 |\mathcal F|\le\binom{n-1}{k-1},
\]
with equality if and only if $\mathcal F$ is a full star.

A collection that is both a $d$-simplex and a
$d$-cluster is called a \textit{$d$-simplex-cluster}.
For $d=1$, the three configurations coincide, each
consisting of two disjoint $k$-element sets.
In general, every family containing no $d$-simplex
or no $d$-cluster also contains no $d$-simplex-cluster.
The converse need not hold, since a family containing
no $d$-simplex-cluster may contain both $d$-simplices
with union larger than $2k$ and $d$-clusters that are
not $d$-simplices.
Keevash and Mubayi~\cite{KM} proposed the following
common strengthening of the simplex and cluster
conjectures.

\begin{conjecture}[Keevash--Mubayi~\cite{KM}]
Let $k>d\ge2$ and $n>k(d+1)/d$.
If $\mathcal F\subseteq\binom{[n]}{k}$ contains no
$d$-simplex-cluster, then
\[
 |\mathcal F|\le\binom{n-1}{k-1},
\]
with equality only for a full star.
\end{conjecture}

The two original conjectures have now been proved.
For the simplex conjecture, Chv\'atal~\cite{Chvatal}
proved the case $k=d+1\ge3$ for $n\ge k+2$.
Frankl~\cite{FranklSimplex} obtained an asymptotically
sharp bound as $n\to\infty$ for every fixed
$k>d\ge2$.
Frankl and F\"uredi~\cite{FFSimplex} then proved
the conjecture for every fixed $k>d\ge2$ and
sufficiently large $n$.
Mubayi and Verstra\"ete~\cite{MV} proved the triangle
case $d=2$ for all $k>2$ and $n\ge3k/2$.
Mubayi~\cite{MubayiStability} subsequently obtained
a stability theorem for each fixed $k>2$ and
sufficiently large $n$.
Keevash and Mubayi~\cite{KM} used stability methods
to prove both the simplex and cluster conjectures for each fixed $d\ge2$
and $\zeta>0$ when
$\zeta n<k<n/2-T(d,\zeta)$ and $n$ is sufficiently
large.
Keller and Lifshitz~\cite{KellerLifshitz} proved
the simplex conjecture for each fixed $d\ge2$
and all $k$ satisfying $d<k\le dn/(d+1)$,
provided that $n>n_0(d)$.
In a recent preprint, Huang~\cite{Huang} proved
the conjecture for all $k>d\ge2$ and
$n\ge k(d+1)/d$.

For the cluster conjecture, Frankl and
F\"uredi~\cite{FFCluster} proved the case $d=2$
for $k>2$ and $n\ge k^2+3k$.
Mubayi~\cite{Mubayi} extended this result to all
$k>2$ and $n\ge3k/2$.
He later proved the case $d=3$ for each fixed
$k>3$ and sufficiently large
$n$~\cite{MubayiFour}.
Mubayi and Ramadurai~\cite{MubayiRamadurai} and,
independently, F\"uredi and \"Ozkahya~\cite{FurediOzkahya}
proved the conjecture for every fixed $k>d\ge2$
and sufficiently large $n$.
Currier~\cite{CurrierCluster} subsequently proved
it for all  admissible parameters.

For the stronger simplex-cluster conjecture,
Lifshitz~\cite{Lifshitz} proved the conjecture
for each fixed $d\ge2$ and $\zeta>0$ in the range
$\zeta n<k\le dn/(d+1)$, provided that
$n\ge n_0(d,\zeta)$.
Currier~\cite{Currier} proved it for
$k>d\ge3$ and $n\ge2k-d+2$.
In this paper, we prove the simplex-cluster
conjecture in its full parameter range.

\begin{theorem}\label{thm:main}
Let $k>d\geq 2$ and $n\ge k(d+1)/d$.
If $\mathcal F\subseteq\binom{[n]}k$ contains no $d$-simplex-cluster,
then
\[
 |\mathcal F|\le\binom{n-1}{k-1}.
\]
Equality holds if and only if, for some $x\in[n]$,
$\mathcal F=\{A\in\binom{[n]}k:x\in A\}$.
\end{theorem}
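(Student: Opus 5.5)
We will follow the spectral strategy indicated in the keywords: derive the bound from a Hoffman-type ratio bound on a weighted Kneser-type graph. Every forbidden configuration will be realised inside a fixed $2k$-element ground set, and a pseudo-adjacency matrix will be built whose nonzero entries are supported on pairs of disjoint $k$-sets. A family $\mathcal F$ without a $d$-simplex-cluster need not be intersecting. We therefore encode it as follows. For $A,B\in\mathcal F$ with $A\cap B=\emptyset$ and $|A\cup B|=2k$, we count extensions by $d-1$ further sets of $\mathcal F$ that would complete a simplex-cluster whose union lies inside $A\cup B$. Absence of simplex-clusters forces combinatorial "defects": for each disjoint pair, many of the $k$-subsets of $A\cup B$ that would complete a simplex must be missing from $\mathcal F$.

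Concretely, the plan has three steps.

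\textbf{Step 1 (reduction to a fixed ground set).} First reduce to $n=2k$ in an averaging sense, as in Katona's circle method. Take a uniformly random $2k$-subset $S\subseteq[n]$ and consider $\mathcal F_S=\mathcal F\cap\binom{S}{k}$. It suffices to prove a weighted inequality on $\binom{S}{k}$ of the form $\sum_{A\in\mathcal F_S}w(A)\le \tfrac12\binom{2k}{k}$ plus a correction term. This correction must be nonpositive on average when $n\ge k(d+1)/d$. The threshold $k(d+1)/d$ is exactly where $d+1$ sets of size $k$ can pairwise-$d$-wise intersect with empty total intersection inside $[n]$. This suggests the simplex-cluster structure is used through the sets $A_i=S\setminus T_i$ ("complements of a partition of $S$ into $d+1$ nearly equal parts"). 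I would first try to handle the case $n<2k$ directly via complements: the map $A\mapsto [n]\setminus A$ turns simplex-clusters into configurations of $(n-k)$-sets, which we exploit together with the above.

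\textbf{Step 2 (spectral bound).} Inside $\binom{S}{k}$, define a matrix $M$ in the Johnson scheme. $M$ is a linear combination of the distance matrices $D_i$, where $(D_i)_{A,B}=1$ iff $|A\cap B|=k-i$. Choose the coefficients so that $1_{\mathcal F}^{T}M1_{\mathcal F}\le 0$ whenever $\mathcal F$ has no $d$-simplex-cluster. To verify this, write each pair $(A,B)$ with $|A\cap B|=j$ and show that the positive entries are dominated by negative contributions from forbidden completions; this is a double-counting over simplex-clusters. Then, using the Eberlein polynomials for the eigenvalues of $M$, apply Hoffman's bound $|\mathcal F|\le \frac{-\lambda_{\min}}{\lambda_{\max}-\lambda_{\min}}N$ to obtain $|\mathcal F|\le \binom{n-1}{k-1}$.

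\textbf{Step 3 (equality).} Equality forces $1_{\mathcal F}$ to lie in the span of the constant vector and the $\lambda_{\min}$-eigenspace. We arrange for the latter to be the first eigenspace $V_1$, spanned by indicator functions of stars. A Boolean function of degree at most $1$ on the slice with the right density is a star (by Filmus's/Friedgut's degree-one characterization). This gives the "only if" part; the "if" part is immediate, since a star has nonempty total intersection.

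\textbf{Main obstacle.} The hardest point is the construction of $M$ in Step 2. It must simultaneously (a) be nonpositive on all simplex-cluster-free families, not merely intersecting ones, and (b) have its minimal eigenvalue exactly on $V_1$ with the sharp Hoffman ratio $k/n$, all the way down to the boundary $n=k(d+1)/d$. I would first try the case $d=2$ to find the coefficients, using pairs at distances $k$ and $k-\lceil k/d\rceil$, then guess the general pattern and verify eigenvalue positivity via monotonicity of Eberlein polynomials.
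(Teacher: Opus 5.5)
\textbf{There is a genuine gap at the core of your plan.} The matrix $M$ of Step~2 is never constructed. The property you need from it, $\mathbf 1_{\mathcal F}^{T}M\mathbf 1_{\mathcal F}\le 0$ for every family with no $d$-simplex-cluster, is essentially the whole theorem.

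The mechanism you suggest for this property does not produce such an inequality. A double count of ``forbidden completions'' involves $d+1$ sets at once and the \emph{absence} of sets from $\mathcal F$. It therefore yields terms of higher degree in $\mathbf 1_{\mathcal F}$, not a quadratic form in the Johnson scheme. You would need a separate argument that turns the $(d+1)$-set condition into a linear bound, per member of $\mathcal F$, on the number of partners at some fixed distance. The proposal contains none.

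Step~1 fails independently, in two ways:
\begin{itemize}
\item Since $k(d+1)/d=k+k/d<2k$ for $d\ge2$, every $n$ near the threshold satisfies $n<2k$, and then no $2k$-subset $S$ exists. You leave this regime as ``try complements''.
\item For $n>2k$, the per-$S$ bound $|\mathcal F_S|\le\frac12\binom{2k}{k}$ is sharp (stars centred in $S$ attain it), but averaging it over $S$ gives only $|\mathcal F|\le\frac12\binom nk$. Your unspecified correction term would have to supply the entire factor $2k/n$.
\end{itemize}

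\textbf{Where the paper uses your framework.} The paper does use a Hoffman-plus-degree-one argument, but only for $n\le 2k-d$, and with a different relation and a different certificate.
\begin{itemize}
\item Pass to complements $\mathcal G\subseteq\binom{[n]}{r}$ with $r=n-k$.
\item For $E\in\mathcal G$ and $X=[n]\setminus E$, the sets $X\setminus B$ with $B\in\mathcal G$ disjoint from $E$ form a $(2k-n)$-uniform family on $k$ points with no $(d-1)$-simplex.
\item Induction on $d$ then gives at most $\binom{k-1}{r}$ disjoint partners for each $E$.
\item Combined with $\lambda_{\min}(KG(n,r))=-\binom{k-1}{r-1}$, this is exactly the Hoffman bound for $M=\mathsf A_{KG(n,r)}-\binom{k-1}{r}I$, whose ratio is $k/n$.
\item Equality is handled by the degree-one characterization you cite (Lemma~\ref{lem:boolean}).
\end{itemize}
In terms of $\mathcal F$, the relevant relation is $A\cup A'=[n]$, not disjointness inside a $2k$-set. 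The key input is the induction hypothesis, not a direct count of completions.

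\textbf{The remaining ranges.} For $n\ge 2k-d+1$, and for $d=2$, the paper uses no spectral argument. It relies on:
\begin{itemize}
\item Currier's theorem, for $n\ge 2k-d+2$;
\item local counting of $(k-1)$-set degrees, with the special cases $r=2$ and $(d,k,n)=(3,5,8)$, for $n=2k-d+1$;
\item for $d=2$, the Mubayi--Verstra\"ete theorem together with a refined cyclic-interval cross-intersection inequality.
\end{itemize}
These arguments use statistics, such as the numbers of $(k-1)$-sets of degree one and two, that are not determined by the pair-distance distribution of $\mathcal F$. Your proposal gives no reason to expect a single Johnson-scheme certificate to cover these ranges.
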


We prove Theorem~\ref{thm:main} by induction on $d$. For $d=2$, we refine a cross-intersection inequality used by Currier~\cite{CurrierCluster} by adapting the cyclic interval argument of Borg and Leader~\cite{BL}. For $d\ge3$ and $n\le2k-d$, we pass to the complementary family and compare two bounds on the number of disjoint pairs. The induction hypothesis gives an upper bound, while the least eigenvalue of the Kneser graph gives a lower bound in terms of the family size. Together, these bounds yield the desired estimate. Currier's theorem~\cite{Currier} covers the range $n\ge2k-d+2$. For $n=2k-d+1$, we use local counting, with two exceptional cases treated directly.

The remainder of the paper is organized as follows. In Section~\ref{sec:marked}, we establish the refined cross-intersection inequality. In Section~\ref{sec:triangle}, we apply this inequality to prove the 2-simplex-cluster bound and characterize the families attaining equality. Finally, in Section~\ref{sec:completion}, we complete the proof of Theorem~\ref{thm:main}.

\section{A refined cross-intersection inequality}
\label{sec:marked}

Two families $\mathcal A,\mathcal B\subseteq\binom{[n]}{k}$
are \textit{cross-intersecting} if $A\cap B\ne\emptyset$
for every $A\in\mathcal A$ and $B\in\mathcal B$.
We use the following cross-intersection inequality
in the form given by Currier~\cite[Theorem~3]{CurrierCluster};
see also~\cite{Borg,BL}.

\begin{lemma}\label{lem:markedbasic}
Let $k\ge1$ and $n\ge2k$.
Suppose that
$\mathcal G\subseteq\mathcal H\subseteq\binom{[n]}{k}$
and that $\mathcal G$ and $\mathcal H$
are cross-intersecting.
Put $\mathcal H^*=\mathcal H\setminus\mathcal G$.
Then
\begin{equation}\label{eq:markedbasic}
 |\mathcal H|
 \le \binom{n-1}{k-1}
   +\frac{n-k}{n}|\mathcal H^*|.
\end{equation}
\end{lemma}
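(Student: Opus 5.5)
We prove \eqref{eq:markedbasic} by Katona's cyclic permutation method, in the form used by Borg and Leader. Fix a cyclic ordering $\sigma$ of $[n]$ and let $\mathcal I_\sigma$ be the family of the $n$ intervals of length $k$ along $\sigma$. Set $g_\sigma=|\mathcal G\cap\mathcal I_\sigma|$ and $h^*_\sigma=|\mathcal H^*\cap\mathcal I_\sigma|$, so that $h_\sigma=g_\sigma+h^*_\sigma$. Averaging over all cyclic orderings, each $k$-set lies in the same number of them. Hence it suffices to prove the local inequality
\[
 g_\sigma+h^*_\sigma\le k+\frac{n-k}{n}\,h^*_\sigma,
 \qquad\text{equivalently}\qquad
 g_\sigma+\frac{k}{n}\,h^*_\sigma\le k .
\]
Multiplying by $(n-1)!$ and dividing by the appropriate count gives $|\mathcal G|+\frac{k}{n}|\mathcal H^*|\le\binom{n-1}{k-1}$, which is exactly \eqref{eq:markedbasic}.

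For the local inequality we split into cases according to $g_\sigma$. If $g_\sigma=0$, then $\frac kn h^*_\sigma\le\frac kn\cdot n=k$, and we are done. If $g_\sigma\ge1$, take an interval $G\in\mathcal G\cap\mathcal I_\sigma$. Every interval of $\mathcal H\cap\mathcal I_\sigma$ must meet $G$, because $\mathcal G$ and $\mathcal H$ are cross-intersecting. Since $\mathcal G\subseteq\mathcal H$, the intervals in $\mathcal G\cap\mathcal I_\sigma$ also pairwise intersect. We use Katona's pairing: since $n\ge2k$, the $2(k-1)$ intervals other than $G$ that meet $G$ split into $k-1$ disjoint pairs $\{P_i,Q_i\}$, where $P_i$ ends at the $i$th element of $G$ and $Q_i$ starts at the $(i+1)$th element. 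Two intervals in a pair are disjoint when $n\ge 2k$.

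The key step is now this. If one member of a pair lies in $\mathcal G$, the other cannot lie in $\mathcal H$ at all, because it is disjoint from a member of $\mathcal G$. So each pair contributes at most $1$ to $g_\sigma+h^*_\sigma$ whenever it contains a $\mathcal G$-interval, and at most $2$ to $h^*_\sigma$ (with $0$ to $g_\sigma$) otherwise. Let $a$ be the number of pairs containing a $\mathcal G$-member and $b$ the number of pairs with both members in $\mathcal H^*$. The remaining $k-1-a-b$ pairs contribute at most one interval each, to $h^*_\sigma$. Then
\[
 g_\sigma+\tfrac kn h^*_\sigma\le 1+a+\tfrac{k}{n}\bigl(2b+(k-1-a-b)\bigr).
\]
This is at most $1+a+b+(k-1-a-b)\le k$ because $2k/n\le1$ and $k/n\le1$.

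The main obstacle is making the pairing bookkeeping precise. We must make sure that intervals not meeting $G$ are excluded, which holds since every $\mathcal H$-interval meets $G$. We must also handle the boundary case $n=2k$, where $P_i$ and $Q_i$ are complementary but still disjoint. Once the local inequality is established, the double counting step is routine.
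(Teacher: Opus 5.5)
Your proof is correct. It is essentially the cyclic-interval argument the paper gives in the proof of Lemma~\ref{lem:markednew}, specialised to $\mathcal T=\emptyset$: your local inequality $g_\sigma+\frac kn h^*_\sigma\le k$ is the paper's $L\ge 0$, and your pair counts $a,b$ reproduce its bound $s\le 2(k-j)$. Note that the paper does not reprove Lemma~\ref{lem:markedbasic} itself but cites it from Currier's Theorem~3.
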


We next refine this inequality under an additional condition,
using the cyclic interval argument of Borg and Leader~\cite{BL}.

\begin{lemma}\label{lem:markednew}
Let $k\ge2$ and $n\ge2k+2$.
Suppose that
$\mathcal G\subseteq\mathcal H\subseteq\binom{[n]}{k}$
and that $\mathcal G$ and $\mathcal H$
are cross-intersecting.
Put $\mathcal H^*=\mathcal H\setminus\mathcal G$,
and let $\mathcal T\subseteq\mathcal H^*$.
Suppose that for each $R\in\mathcal T$
there is a point $y\in[n]\setminus R$ such that,
for every $a\in R$, the only members of $\mathcal H$
containing $R\setminus\{a\}$ are
\[
 R
 \quad\text{and}\quad
 (R\setminus\{a\})\cup\{y\}.
\]
Writing $t=|\mathcal T|$, we have
\begin{equation}\label{eq:linkform}
 |\mathcal H|
 \le \binom{n-1}{k-1}
   +\frac{n-k}{n}|\mathcal H^*|
   -\frac{t}{n}.
\end{equation}
The inequality is strict if $n\ge2k+3$ and $t>0$.
\end{lemma}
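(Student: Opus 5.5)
We would follow the Katona cyclic-permutation averaging, as in the proof of Lemma~\ref{lem:markedbasic} via Borg--Leader. For a cyclic ordering $\sigma$ of $[n]$, let $\mathcal I_\sigma$ be the $n$ cyclic intervals of length $k$, and put $g_\sigma=|\mathcal G\cap\mathcal I_\sigma|$ and $h^*_\sigma=|\mathcal H^*\cap\mathcal I_\sigma|$. Averaging over all $(n-1)!$ cyclic orders turns \eqref{eq:markedbasic} into the per-cycle bound
\[
 g_\sigma+h^*_\sigma\le k+\tfrac{n-k}{n}\,h^*_\sigma, \qquad\text{that is,}\qquad g_\sigma+\tfrac{k}{n}h^*_\sigma\le k .
\]
Borg--Leader prove this from a local fact: if $\mathcal G\cap\mathcal I_\sigma\neq\emptyset$, then for each interval $I\in\mathcal G$ the $2(k-1)$ intervals meeting $I$ in a proper part split into $k-1$ complementary pairs of disjoint intervals. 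At most one interval of each pair lies in $\mathcal H$, and neither lies in $\mathcal G$ unless both are excluded. For \eqref{eq:linkform} it suffices to prove the sharpened per-cycle inequality
\[
 g_\sigma+\tfrac{k}{n}h^*_\sigma\le k-\tfrac{1}{n}\,\tau_\sigma,
\]
where $\tau_\sigma$ counts the intervals of $\mathcal I_\sigma$ that lie in $\mathcal T$ and are \emph{well placed}. Here ``well placed'' means that the associated point $y$ is adjacent to $R$ on the cycle, so that $(R\setminus\{a\})\cup\{y\}$ is also an interval when $a$ is the endpoint of $R$ far from $y$. Each $R\in\mathcal T$ is an interval in a fraction $k!(n-k)!/(n-1)!$ of cycles, and $y$ is adjacent to it in a fraction $2/(n-k)$ of those. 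The averaged loss is then about $t\cdot\frac{2}{n-k}\cdot\frac{1}{\binom{n-1}{k-1}}\cdot\frac{1}{n}\cdot\binom{n-1}{k-1}$ after normalization, and we will calibrate how much loss per occurrence is needed so that the total is at least $t/n$. If the adjacency fraction is too small, we will instead count every cycle in which $R$ is an interval, and use the hypothesis on $R$ to obtain a deficit there.

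The main step is the per-cycle analysis; we split it into cases.

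\emph{Case $g_\sigma=0$.} Then $\frac{k}{n}h^*_\sigma\le k$ trivially, with slack $k-\frac kn h^*_\sigma$. Each $R\in\mathcal T\cap\mathcal I_\sigma$ forces its two neighbouring shifts $R\setminus\{a\}\cup\{\text{next}\}$ out of $\mathcal H$, except possibly the one using $y$. Hence $h^*_\sigma\le n-\tau_\sigma$ roughly, which gives slack $\ge \frac kn\tau_\sigma\ge\frac1n\tau_\sigma$.

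\emph{Case $g_\sigma\ge1$.} The intervals of $\mathcal H$ lie within the $2k-1$ intervals meeting some fixed $G\in\mathcal G$, as in Borg--Leader. This gives $g_\sigma+h^*_\sigma\le k$ once one runs their pairing argument, hence $g_\sigma+\frac kn h^*_\sigma\le k-\frac{n-k}{n}h^*_\sigma$. Any $R\in\mathcal T$ present contributes to $h^*_\sigma$, giving slack at least $\frac{n-k}{n}\ge\frac1n$ per member, more than needed.

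The delicate configuration is therefore $g_\sigma=0$ with many $\mathcal H^*$ intervals, where $h^*_\sigma$ is close to $n$. This is where $n\ge2k+2$ enters. Membership of $R$ in $\mathcal T$ says that the consecutive shifts of $R$ on both sides are absent from $\mathcal H$, unless $y$ is the adjacent point, and this can happen on at most one side. So each $\mathcal T$-interval creates at least one gap among the $n$ intervals. We must check that these gaps are not double counted between neighbouring $\mathcal T$-intervals; a gap between two consecutive $\mathcal T$-intervals is shared. We would handle this by showing that two consecutive intervals $R, R'$ cannot both be in $\mathcal T$ with their forbidden shift coinciding, using the ``only members'' condition: $R'$ contains $R\setminus\{a\}$, which forces $R'=(R\setminus\{a\})\cup\{y\}$. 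Thus the bound needed is $h^*_\sigma\le n-\tau_\sigma/k$, which is weaker than what the gap count gives.

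For strictness when $n\ge2k+3$ and $t>0$, we will exhibit a cycle in which $R\in\mathcal T$ is an interval with $y$ not adjacent to it. There is room to do so because $n-k\ge k+3$ leaves positions to spare. In that cycle both neighbouring shifts are absent, so the inequality there is strict, and averaging yields strict inequality in \eqref{eq:linkform}. The main obstacle we expect is making the per-cycle deficit bookkeeping exact, so that the constant comes out as precisely $t/n$ rather than some other multiple.
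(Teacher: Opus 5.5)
Your overall framework, Katona averaging with a per-cycle deficit proportional to the number of $\mathcal T$-intervals, is the right one. However, the case $g_\sigma\ge1$ rests on a false version of the Borg--Leader pairing. It is not true that at most one interval of each disjoint pair $(I_i,I_{i+k})$ lies in $\mathcal H$. The hypothesis only makes $\mathcal G$ and $\mathcal H$ cross-intersecting, so both members of a pair may lie in $\mathcal H^*$, and $g_\sigma+h^*_\sigma\le k$ fails. For example, if $\mathcal G=\{G\}$ and $\mathcal H$ consists of all $k$-sets meeting $G$, then every cycle in which $G$ is an interval has $g_\sigma=1$ and $h^*_\sigma=2k-2$.

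Fix $I_0\in\mathcal G$. What the pairing actually gives is that the other $g_\sigma-1$ intervals of $\mathcal G$ occupy distinct pairs whose partners lie outside $\mathcal H$. The remaining $k-g_\sigma$ pairs hold at most two $\mathcal H^*$-intervals each. Hence $\tau_\sigma\le h^*_\sigma\le 2(k-g_\sigma)$, and
\[
 k-g_\sigma-\frac kn h^*_\sigma\ \ge\ \frac{(n-2k)(k-g_\sigma)}{n}\ \ge\ \frac{(n-2k)\tau_\sigma}{2n}.
\]
This covers the required $\tau_\sigma/n$ only because $n-2k\ge2$. So the hypothesis $n\ge2k+2$ is consumed here, where the slack is tight, not in the case $g_\sigma=0$. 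There is no comfortable slack of $\frac{n-k}{n}$ per $\mathcal T$-interval.

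Several other steps also need adjustment.
\begin{itemize}
\item The reduction must let $\tau_\sigma$ count all $\mathcal T$-intervals of $\sigma$, not only the well-placed ones. The point $y$ is adjacent to $R$ in only a fraction $2/(n-k)<1$ of the cycles in which $R$ is an interval. Counting only well-placed intervals therefore yields a deficit of $2t/((n-k)n)$, not $t/n$.
\item In the case $g_\sigma=0$, the bound $h^*_\sigma\le n-\tau_\sigma$ is not justified, since gaps can be shared. You do not need to rule out sharing. Each $\mathcal T$-interval has an adjacent interval outside $\mathcal H$, and each interval is adjacent to only two others. So at least $\tau_\sigma/2\ge\tau_\sigma/k$ intervals are absent, using only $k\ge2$.
\item For strictness, the two absent neighbours of your chosen $R$ give a strict per-cycle inequality only when $g_\sigma=0$, since then at least $(\tau_\sigma+1)/2$ intervals are absent. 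If $g_\sigma\ge1$ in that cycle, strictness must come from the display above. When $n\ge2k+3$ and $\tau_\sigma\ge1$, it gives slack at least $\frac{3\tau_\sigma}{2n}>\frac{\tau_\sigma}{n}$. That is the role of $n\ge2k+3$, not making room for the ordering.
\end{itemize}
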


\begin{proof}
Fix a cyclic ordering $(x_0,\ldots,x_{n-1})$ of $[n]$,
with subscripts taken modulo $n$.
Its $k$-element intervals are defined by
\[
 I_i=\{x_i,x_{i+1},\ldots,x_{i+k-1}\},
 \qquad 0\le i\le n-1.
\]
Let $j$, $s$, and $r$ be the numbers of these intervals
belonging to $\mathcal G$, $\mathcal H^*$,
and $\mathcal T$, respectively.
Put
\[
 L=n-\frac nk j-s.
\]
We show that $L\ge r/k$ for every cyclic ordering.

Suppose first that $j\ge1$, and choose
$
 I_a=\{x_a,x_{a+1},\ldots,x_{a+k-1}\}\in\mathcal G.
$
Set $\widetilde x_i=x_{a+i}$ for $0\le i\le n-1$,
with subscripts taken modulo $n$.
Then $(\widetilde x_0,\ldots,\widetilde x_{n-1})$
describes the same cyclic ordering, starting at $x_a$,
and the chosen interval is
$\{\widetilde x_0,\ldots,\widetilde x_{k-1}\}$.
The counts $j,s,r$ are unchanged.
We now omit the tildes, so that
\[
 I_0=\{x_0,\ldots,x_{k-1}\}\in\mathcal G.
\]
Every interval from $\mathcal H$ must intersect $I_0$.
An interval starting at an index between $k$ and $n-k$
is disjoint from $I_0$.
Hence, the possible starting indices of intervals from
$\mathcal H$ are
\[
 -k+1,\ldots,-1,0,1,\ldots,k-1 \pmod{n}.
\]
Apart from $I_0$, these possible intervals form
the following $k-1$ pairs:
\[
 (I_i,I_{i+k}),
 \qquad -k+1\le i\le-1.
\]
The two intervals in each pair are disjoint.
If one interval in a pair belongs to $\mathcal G$,
its partner cannot belong to $\mathcal H$.
Since $\mathcal G\subseteq\mathcal H$, every interval
from $\mathcal G$ other than $I_0$ occurs in one of the
$k-1$ pairs.
If a pair contains an interval from $\mathcal G$,
its partner cannot belong to $\mathcal H$.
Thus, the other $j-1$ intervals from $\mathcal G$
occupy distinct pairs, so $j-1\le k-1$ and hence $j\le k$.

These $j-1$ pairs contain no interval from $\mathcal H^*$,
and neither does $I_0$.
All intervals from $\mathcal H^*$ must occur
in the remaining
\[
 (k-1)-(j-1)=k-j
\]
pairs, with at most two in each pair.
Hence, $s\le2(k-j)$.
Since $\mathcal T\subseteq\mathcal H^*$, we also have
$r\le s\le2(k-j)$.
Using $n\ge2k+2$, we obtain
\begin{equation}\label{eq:cyclepositive}
 \begin{aligned}
 L
 &=n-\frac nk j-s\\
 &\ge n-\frac nk j-2(k-j)
   =\frac{n-2k}{k}(k-j)\\
 &\ge\frac{n-2k}{2k}r
   \ge\frac rk.
 \end{aligned}
\end{equation}

Now suppose that $j=0$.
Every interval belonging to $\mathcal H$ then belongs
to $\mathcal H^*$, so $L=n-s$ is the number of intervals
absent from $\mathcal H$.
If $r=0$, then $L\ge0=r/k$, as required.
Assume that $r>0$.
Let $R=I_i\in\mathcal T$ be any such interval.
Its two adjacent intervals are
\[
 \begin{aligned}
 I_{i-1}
 &= (R\setminus\{x_{i+k-1}\})\cup\{x_{i-1}\},\\
 I_{i+1}
 &= (R\setminus\{x_i\})\cup\{x_{i+k}\}.
 \end{aligned}
\]
The added points $x_{i-1}$ and $x_{i+k}$ are distinct
and lie outside $R$.
If both adjacent intervals belonged to $\mathcal H$,
the hypothesis on $R$ would force both added points
to equal its associated point $y$.
Thus, at least one adjacent interval is absent
from $\mathcal H$.

Count pairs consisting of an interval from $\mathcal T$
and an adjacent interval absent from $\mathcal H$.
Each of the $r$ intervals from $\mathcal T$
occurs in at least one such pair.
Conversely, an absent interval $I_a$ can be paired
only with $I_{a-1}$ or $I_{a+1}$.
Thus, each of the $L$ absent intervals occurs
in at most two such pairs.
Hence, $r\le2L$.
Since $k\ge2$, we obtain
\[
 L\ge\frac r2\ge\frac rk.
\]

We now average over all $(n-1)!$ cyclic orderings of $[n]$.
Let $\Omega$ be the set of cyclic orderings of $[n]$.
For $\sigma\in\Omega$, let $j_\sigma$, $s_\sigma$,
and $r_\sigma$ denote the corresponding values
of $j$, $s$, and $r$.
Since each $k$-element set is an interval
in exactly $k!(n-k)!$ cyclic orderings,
double counting gives
\[
 \sum_{\sigma\in\Omega}j_\sigma
 =|\mathcal G|k!(n-k)!,\quad
 \sum_{\sigma\in\Omega}s_\sigma
 =|\mathcal H^*|k!(n-k)!,\quad
 \sum_{\sigma\in\Omega}r_\sigma
 =tk!(n-k)!.
\]
Since $L\ge r/k$ for every cyclic ordering, we have
\[\frac nk j_\sigma+s_\sigma+\frac1k r_\sigma\le n.\]
Hence,
\[
 \left(
 \frac nk|\mathcal G|+|\mathcal H^*|+\frac tk
 \right)k!(n-k)!
 =\sum_{\sigma\in\Omega}
   \left(\frac nk j_\sigma+s_\sigma+\frac1k r_\sigma\right)\le n!.
\]
Dividing by $k!(n-k)!$ and multiplying by $k$ yields
\[
 n|\mathcal G|+k|\mathcal H^*|+t
 \le k\binom{n}{k}.
\]
Since $|\mathcal G|=|\mathcal H|-|\mathcal H^*|$,
it follows that
\[
 |\mathcal H|
 \le \binom{n-1}{k-1}
   +\frac{n-k}{n}|\mathcal H^*|
   -\frac tn,
\]
which proves \eqref{eq:linkform}.

Finally, suppose that $n\ge2k+3$ and $t>0$. Since $t=|\mathcal T|>0$, choose $R\in\mathcal T$.
Let $y$ be the point associated with $R$ in the hypothesis.
Take distinct points $p,q\in[n]\setminus(R\cup\{y\})$.
Choose a cyclic ordering in which the elements of $R$
are consecutive, with $p$ immediately before them
and $q$ immediately after them.
Write $R=\{x_1,\ldots,x_k\}$ in its cyclic order.
The two intervals adjacent to $R$ are
\[
 (R\setminus\{x_k\})\cup\{p\}
 \quad\text{and}\quad
 (R\setminus\{x_1\})\cup\{q\}.
\]
By the hypothesis on $R$ and $p,q\ne y$,
neither belongs to $\mathcal H$.
Moreover, $R\in\mathcal T$ is an interval in this ordering,
so $r\ge1$.

If $j=0$, then $L$ is the number of intervals outside $\mathcal H$.
Count pairs $(A,B)$, where $A\in\mathcal T$
and $B\notin\mathcal H$ is adjacent to $A$.
As shown above, every interval from $\mathcal T$
has at least one neighbour outside $\mathcal H$.
The chosen interval $R$ has two such neighbours.
Hence, there are at least $(r-1)+2=r+1$ pairs.
Each interval outside $\mathcal H$ has only two neighbours,
so it occurs in at most two pairs.
Hence, $r+1\le2L$.
Since $k\ge2$, we obtain
\[
 L\ge\frac{r+1}{2}>\frac r2\ge\frac rk.
\]
If $j\ge1$, then \eqref{eq:cyclepositive} gives
\[
 L\ge\frac{3r}{2k}>\frac rk.
\]
Thus, $L\ge r/k$ holds for every cyclic ordering
and is strict for at least one.
Averaging gives strict inequality in \eqref{eq:linkform}.
\end{proof}

\section{Families without $2$-simplex-clusters}\label{sec:triangle}

Using Lemma~\ref{lem:markednew}, we prove the following
bound for families containing no $2$-simplex-cluster
when $k\ge4$ and $n>2k$.
We treat the remaining cases for $d=2$
in Section~\ref{sec:completion}.

\begin{theorem}\label{thm:triangle}
Let $k\ge4$ and $n>2k$.
If $\mathcal F\subseteq\binom{[n]}{k}$
contains no $2$-simplex-cluster, then
\[
 |\mathcal F|\le\binom{n-1}{k-1}.
\]
Equality holds if and only if $\mathcal F$ is a full star.
\end{theorem}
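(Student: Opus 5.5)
Following Currier's approach for the cluster conjecture with $d=2$, we will reduce to the cross-intersecting setting of Lemmas~\ref{lem:markedbasic} and~\ref{lem:markednew} by examining how sets of $\mathcal F$ meet one another. For $A\in\mathcal F$, say $A$ is \emph{good} if $A$ meets every member of $\mathcal F$, and let $\mathcal G$ be the family of good sets. A $2$-simplex-cluster is a triple $A,B,C$ with pairwise nonempty intersections, $A\cap B\cap C=\emptyset$, and $|A\cup B\cup C|\le 2k$. We will show that every disjoint pair in $\mathcal F$ is heavily constrained. Note that $\mathcal G$ and $\mathcal F$ are cross-intersecting by definition. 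If $\mathcal F$ were intersecting, Erd\H{o}s--Ko--Rado with $n>2k$ would finish the proof at once. In general, however, $\mathcal H=\mathcal F$ is not usable directly, because $\mathcal H^*=\mathcal F\setminus\mathcal G$ is nonempty and \eqref{eq:markedbasic} then only yields the bound $\binom{n-1}{k-1}+\frac{n-k}{n}|\mathcal H^*|$. The plan is therefore to modify $\mathcal F$ into a family $\mathcal H$ with $|\mathcal H|\ge|\mathcal F|$ in which the excess term is cancelled.

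\textbf{Step 1 (structure of disjoint pairs).} Suppose $A,B\in\mathcal F$ are disjoint. Take any $C\in\mathcal F$ meeting both $A$ and $B$. Then $A\cap B\cap C=\emptyset$ automatically, so we must have $|A\cup B\cup C|>2k$, that is, $C\not\subseteq A\cup B$. The key local configurations are the sets $C=(A\setminus\{a\})\cup\{b\}$ with $a\in A$ and $b\in B$. Each such set lies inside $A\cup B$ and meets both $A$ and $B$, so none of them can belong to $\mathcal F$. More generally, every $k$-set inside $A\cup B$ that meets both $A$ and $B$ is excluded. We will use this to build an injection, in the spirit of a shifting or Kleitman-type exchange, from the non-good sets into $k$-sets missing from $\mathcal F$. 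The aim is a bound of the form
\[
 |\mathcal F|\le |\mathcal H|,
\]
where $\mathcal H\supseteq\mathcal G$ is cross-intersecting with $\mathcal G$ and the sets $R\in\mathcal H^*$ whose neighbourhoods $R\setminus\{a\}$ are sparse (those with a unique partner point $y$) form the family $\mathcal T$ of Lemma~\ref{lem:markednew}.

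\textbf{Step 2 (accounting).} We apply Lemma~\ref{lem:markednew} with $n\ge 2k+2$; the case $n=2k+1$ is handled separately, probably by Lemma~\ref{lem:markedbasic} plus a direct count. This gives
\[
|\mathcal H|\le\binom{n-1}{k-1}+\frac{n-k}{n}|\mathcal H^*|-\frac tn.
\]
We then need a compensating inequality showing that $\mathcal F$ is smaller than $\mathcal H$ by at least $\frac{n-k}{n}|\mathcal H^*|-\frac tn$. We will obtain it by double counting pairs $(R,S)$, where $R$ is a non-good set and $S$ is an excluded $k$-set of the form $(R\setminus\{a\})\cup\{b\}$ from Step~1. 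Each non-good $R$ produces at least about $k$ such excluded neighbours. The condition $k\ge4$ should be what makes the count $k(n-k)$ of neighbours versus the $n$-fold normalisation come out in our favour. The sets in $\mathcal T$ are exactly those where this count is tight, which is why the $-t/n$ correction is needed.

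\textbf{Step 3 (equality).} If equality holds, every inequality is tight. The strictness clause of Lemma~\ref{lem:markednew} for $n\ge 2k+3$, together with a direct argument at $n=2k+1,2k+2$, forces $\mathcal H^*=\emptyset$. Hence $\mathcal F$ is intersecting of size $\binom{n-1}{k-1}$, and the Erd\H{o}s--Ko--Rado uniqueness statement for $n>2k$ shows that $\mathcal F$ is a full star. The main obstacle is Step~2: defining $\mathcal H$ (equivalently the charging scheme) so that the loss $\frac{n-k}{n}|\mathcal H^*|$ is fully paid for. I would first try $\mathcal H=\mathcal F$ and charge each non-good set to its excluded exchange neighbours, using $\mathcal T$ to absorb the non-good sets with the fewest such neighbours.
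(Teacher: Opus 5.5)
Your proposal has a genuine gap: Step~1 misreads the forbidden configuration, and the rest of the argument is built on it.

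\textbf{The error in Step~1.} A $2$-simplex-cluster must have all three pairwise intersections nonempty, so a triple containing a disjoint pair $A,B$ is never one. For instance, $\{A,B,(A\setminus\{a\})\cup\{b\}\}$ with $A\cap B=\emptyset$ contains no $2$-simplex-cluster. So your claim that no $k$-set inside $A\cup B$ meeting both $A$ and $B$ can lie in $\mathcal F$ is false. What is true is much weaker: two exchange sets $(A\setminus\{a\})\cup\{b\}$ and $(A\setminus\{a'\})\cup\{b'\}$ with $a\ne a'$ and $b\ne b'$ form a $2$-simplex-cluster together with $B$.

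\textbf{Consequences for Steps~2 and~3.} Step~2 charges non-good sets to ``excluded neighbours'' on the basis of Step~1. Moreover, $\mathcal H$ and $\mathcal T$ are never actually defined, so the compensating inequality has no proof. The equality step also fails as stated. The strictness clause of Lemma~\ref{lem:markednew} only forces $t=0$, not $\mathcal H^*=\emptyset$, so you never reach an intersecting family to which Erd\H{o}s--Ko--Rado uniqueness could apply. Finally, the case $n=2k+1$ is left open.

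\textbf{The missing idea: localize to links of $2$-sets.} The forbidden configuration genuinely constrains pairs meeting in exactly two points. If $A\cap B=\{a,b\}$ and both $(A\setminus\{a\})\cup\{y_a\}$ and $(A\setminus\{b\})\cup\{y_b\}$ lie in $\mathcal F$ with $y_a\ne y_b$, then these two sets and $B$ form a $2$-simplex-cluster. The paper's argument then runs as follows.
\begin{enumerate}
\item It works in $\mathcal H_D=\{A\setminus D:D\subseteq A\in\mathcal F\}$ for each $D\in\binom{[n]}{2}$. There, disjointness means $A\cap B=D$.
\item It declares $A\setminus D$ bad according to the degrees of the $(k-1)$-sets $A\setminus\{a\}$, $a\in D$: degree one, or degree two with a common partner point. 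This replaces your criterion of whether $A$ meets everything. Lemma~\ref{lem:localrefined} then makes the good part cross-intersecting with $\mathcal H_D$.
\item It applies Lemma~\ref{lem:markednew} in each link with parameters $(n-2,k-2)$. This requires $k\ge4$ and $n-2\ge2(k-2)+3$, which is exactly $n>2k$. So no separate case arises and strictness is always available.
\item The total loss $\sum_D|\mathcal H_D^*|$ is paid for by counting $(k-1)$-sets of degree one and two against $k$-sets missing from $\mathcal F$.
\item The sets all of whose $(k-1)$-subsets have degree two with a common partner point supply the families $\mathcal T_D$. Their $-t/n$ terms exactly cancel the excess these sets create.
\item In the equality case, every $A\in\mathcal F$ has exactly one $(k-1)$-subset of degree one and none of degree two. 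The conclusion then follows from Lemma~\ref{lem:rigidity}, not from Erd\H{o}s--Ko--Rado.
\end{enumerate}
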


Before proving Theorem~\ref{thm:triangle},
we establish two lemmas concerning the degrees of $(k-1)$-sets.
We first introduce the notation used in their statements.
For $\mathcal F\subseteq\binom{[n]}{k}$ and
$T\in\binom{[n]}{k-1}$, define
\[
 \deg_{\mathcal F}(T)
 =|\{A\in\mathcal F:T\subseteq A\}|.
\]
For each $A\in\mathcal F$, let
\[
 P_i(A)
 =\{a\in A:\deg_{\mathcal F}(A\setminus\{a\})=i\},
 \qquad
 p_i(A)=|P_i(A)|,
 \qquad i\in\{1,2\}.
\]
Thus, $p_i(A)$ counts the $(k-1)$-subsets of $A$
of degree $i$.
If $a\in P_2(A)$, exactly two members of $\mathcal F$
contain $A\setminus\{a\}$.
One is $A$; write the other as
\[
 (A\setminus\{a\})\cup\{y_A(a)\},
\]
where $y_A(a)\in[n]\setminus A$ is uniquely determined.

\begin{lemma}\label{lem:localrefined}
Let $k\ge4$ and $n>2k$.
Suppose that $\mathcal F\subseteq\binom{[n]}{k}$
contains no $2$-simplex-cluster.
If $A,B\in\mathcal F$ satisfy $A\cap B=\{a,b\}$,
then either
\[
 \{a,b\}\cap P_1(A)\ne\emptyset,
\]
or
\[
 a,b\in P_2(A)
 \qquad\text{and}\qquad
 y_A(a)=y_A(b).
\]
\end{lemma}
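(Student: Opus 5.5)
The plan is to argue by contradiction and build a forbidden triangle from $B$ together with two \emph{neighbours} of $A$. Suppose that $\{a,b\}\cap P_1(A)=\emptyset$. Since $A$ itself contains $A\setminus\{a\}$ and $A\setminus\{b\}$, both $(k-1)$-sets then have degree at least $2$. Hence there exist
\[
 C=(A\setminus\{a\})\cup\{y\}\in\mathcal F,
 \qquad
 C'=(A\setminus\{b\})\cup\{z\}\in\mathcal F,
\]
with $y,z\in[n]\setminus A$. The key claim is that, for \emph{every} such choice of $C$ and $C'$, we must have $y=z$ (and moreover $y\in B$). Otherwise $\{B,C,C'\}$ is a $2$-simplex-cluster.

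To verify the claim, I would check the defining conditions for $\{B,C,C'\}$ one at a time.

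\emph{Distinctness.} We have $C\neq C'$ because $a\in C'\setminus C$. We have $B\neq C$ because $a\in B\setminus C$, and $B\neq C'$ because $b\in B\setminus C'$.

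\emph{Pairwise intersections.} They are nonempty, since $b\in B\cap C$, $a\in B\cap C'$, and $A\setminus\{a,b\}\subseteq C\cap C'$, which is nonempty as $k\ge4$.

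\emph{Total intersection.} We have
\[
 C\cap C'=(A\setminus\{a,b\})\cup(\{y\}\cap\{z\}),
\]
and $B\cap(A\setminus\{a,b\})=\emptyset$ because $A\cap B=\{a,b\}$. Hence $B\cap C\cap C'$ is empty unless $y=z\in B$.

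\emph{Union.} Since $a,b\in B$, we get
\[
 B\cup C\cup C'=B\cup(A\setminus\{a,b\})\cup\{y,z\},
\]
which has size at most $k+(k-2)+2=2k$.

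So whenever $y\ne z$ (or $y=z\notin B$), the triple $\{B,C,C'\}$ is a $2$-simplex-cluster in $\mathcal F$, a contradiction.

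Finally, I would conclude from the claim. If $\deg_{\mathcal F}(A\setminus\{a\})\ge3$, there are two such sets $C$ with distinct added points $y_1\ne y_2$. Pairing both with one fixed $C'$ would force $y_1=z=y_2$, which is impossible. Hence $\deg_{\mathcal F}(A\setminus\{a\})=2$, and symmetrically $\deg_{\mathcal F}(A\setminus\{b\})=2$, so $a,b\in P_2(A)$. The claim applied to the unique choices of $C$ and $C'$ then gives $y_A(a)=y_A(b)$.

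I do not expect a real obstacle. The only delicate point is the bookkeeping in the total-intersection and union computations, especially that $B$ meets $A$ only in $\{a,b\}$, which is exactly what makes $B$ avoid $C\cap C'$ apart from a possible common added point.
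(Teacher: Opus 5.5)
Your proposal is correct and follows the paper's proof. Assuming $a,b\notin P_1(A)$, you pair $B$ with $(A\setminus\{a\})\cup\{y\}$ and $(A\setminus\{b\})\cup\{z\}$, show that the three sets form a $2$-simplex-cluster whenever $y\ne z$, and deduce that both $(k-1)$-sets have degree two with $y_A(a)=y_A(b)$. Your extra observation that the common point must also lie in $B$ is correct but is not needed for the statement.
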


\begin{proof}
Suppose that $a,b\notin P_1(A)$.
Define
\[
 \begin{aligned}
 Y_a&=\{y\in[n]\setminus A:
       (A\setminus\{a\})\cup\{y\}\in\mathcal F\},\\
 Y_b&=\{y\in[n]\setminus A:
       (A\setminus\{b\})\cup\{y\}\in\mathcal F\}.
 \end{aligned}
\]
Since $A\in\mathcal F$, both $A\setminus\{a\}$
and $A\setminus\{b\}$ have positive degree.
The assumption $a,b\notin P_1(A)$ excludes degree one.
Hence,
\[
 |Y_a|=\deg_{\mathcal F}(A\setminus\{a\})-1\ge1,\qquad
 |Y_b|=\deg_{\mathcal F}(A\setminus\{b\})-1\ge1.
\]

Assume that there are distinct points
$y_a\in Y_a$ and $y_b\in Y_b$.
Put
\[
 C_a=(A\setminus\{a\})\cup\{y_a\},
 \qquad
 C_b=(A\setminus\{b\})\cup\{y_b\}.
\]
These sets belong to $\mathcal F$, and
\[
 b\in B\cap C_a,\qquad
 a\in B\cap C_b,\qquad
 C_a\cap C_b=A\setminus\{a,b\}.
\]
Since $k\ge4$, the set $A\setminus\{a,b\}$ is nonempty.
Thus, $B,C_a,C_b$ are pairwise intersecting.
They are also distinct, since
$
 a\notin C_a, b\notin C_b, \{a,b\}\subseteq B.
$
On the other hand,
\[
 B\cap C_a\cap C_b
 =B\cap(A\setminus\{a,b\})
 =\emptyset.
\]
Moreover,
\[
 |B\cup C_a\cup C_b|
 \le |A\cup B|+2
 =2k.
\]
Hence, $B,C_a,C_b$ form a $2$-simplex-cluster,
contrary to the hypothesis.

It follows that $y_a=y_b$ for every choice
of $y_a\in Y_a$ and $y_b\in Y_b$.
Since $Y_a$ and $Y_b$ are nonempty, we obtain
$
 Y_a=Y_b=\{y\}
$
for some $y\in[n]\setminus A.$
Therefore, $a,b\in P_2(A)$ and $y_A(a)=y_A(b)=y$.
\end{proof}

The following lemma is a consequence of the equality
case in Bollob\'as's set-pairs theorem~\cite{Bollobas}.
For completeness, we include the direct proof given
by Huang~\cite[Lemma~3.5]{Huang}, adapted to our notation.

\begin{lemma}\label{lem:rigidity}
Let $2\le k<n$ and
$\mathcal F\subseteq\binom{[n]}{k}$.
Suppose that
\[
 |\mathcal F|=\binom{n-1}{k-1},
\]
and that every member of $\mathcal F$
has exactly one $(k-1)$-subset of degree one.
Then $\mathcal F$ is a full star.
\end{lemma}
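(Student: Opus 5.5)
Each member $A\in\mathcal F$ has a unique $(k-1)$-subset $A\setminus\{a_A\}$ of degree one. Set $\partial_1=\{A\setminus\{a_A\}:A\in\mathcal F\}$. The map $A\mapsto A\setminus\{a_A\}$ is injective, because a degree-one $(k-1)$-set lies in only one member. Hence
\[
 |\partial_1|=|\mathcal F|=\binom{n-1}{k-1}.
\]
The plan is to exploit this in Bollob\'as form. For each $A$, let $S_A=A\setminus\{a_A\}$ (a $(k-1)$-set) and $T_A=[n]\setminus A$ (an $(n-k)$-set). Then $S_A\cap T_A=\emptyset$. For $B\neq A$ we want $S_A\cap T_B\ne\emptyset$ or $S_B\cap T_A\neq\emptyset$. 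Indeed, if $S_A\cap T_B=\emptyset$, then $S_A\subseteq B$, so $B$ contains the degree-one set $S_A$, forcing $B=A$. So the pairs even satisfy the stronger "skew" condition $S_A\not\subseteq B$.

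By the (skew) Bollob\'as theorem, the number of such pairs is at most $\binom{(k-1)+(n-k)}{k-1}=\binom{n-1}{k-1}$, and we are at equality. The equality case of the uniform Bollob\'as theorem for set pairs (Lov\'asz / Frankl characterisation) says that equality forces the pairs to be all $(S,X\setminus S)$ for a fixed ground set $X$ of size $n-1$. So $S_A\cup T_A=[n]\setminus\{x\}$ for a fixed $x$, that is, $a_A=x$ for every $A$. Hence every $A$ contains $x$, and with $|\mathcal F|=\binom{n-1}{k-1}$ the family is the full star at $x$.

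To avoid citing the equality case (which is only clean in the non-skew version), I would instead give the direct argument Huang uses, a permutation count in the style of Lubell. Consider orderings $\pi$ of $[n]$. Say $A$ is \emph{hit} by $\pi$ if the elements of $S_A$ all precede those of $T_A$, with $a_A$ placed anywhere. Two different members cannot both be arranged so that $S_A$ forms an initial block disjoint from $T_A$ and $S_B$ likewise in a compatible way. The main obstacle is making this precise: one must show that for each $\pi$ at most one $A$ has $S_A$ equal to the first $k-1$ elements of $\pi$ restricted to $[n]\setminus\{a_A\}$ (this holds by injectivity), and then get the equality consequence. Averaging gives
\[
 \sum_A \frac{1}{\binom{n-1}{k-1}}\le 1.
\]
Equality means every ordering realises exactly one such $A$. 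From this, I would take $\pi$ and swap the elements at positions $k-1$ and $k$ and around $a_A$, deriving that $a_A$ cannot depend on $A$. Concretely, if $a_A\ne a_B$ for two members, build an ordering realised by none, contradicting equality.

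Thus all $a_A$ equal a common $x$, and the conclusion follows as above.
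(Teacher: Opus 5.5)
Your first route is sound, and it is exactly the derivation the paper points to before its proof (``a consequence of the equality case in Bollob\'as's set-pairs theorem''). You do have the labels backwards, though. You showed $S_A\cap T_B\neq\emptyset$ for \emph{every} ordered pair $A\neq B$, which is the full symmetric Bollob\'as condition; the skew condition (required only for $i<j$) is the weaker one. So the clean uniform equality case applies directly, and your stated reason for dropping the citation disappears. The paper instead gives a self-contained proof, and your proposed self-contained replacement is where the gap lies. Also, that proof (Huang's) is not a Lubell count. It double counts pairs $(T,C)$ with $T\subseteq C\notin\mathcal F$ and then uses an exchange walk.

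In your permutation argument, ``at most one hit per ordering'' does not follow from injectivity of $A\mapsto S_A$, because different $A$ delete different points $a_A$. It follows from the cross condition $S_A\not\subseteq B$ via the usual cut-point argument. More seriously, the equality step is only asserted. Unwind it: an ordering whose first $k-1$ entries form $P$ and whose $k$-th entry is $z$ hits $A$ if and only if $A=P\cup\{z\}$ or $S_A=P$. So ``every ordering is hit exactly once'' says precisely that every $(k-1)$-set not of degree one has degree $n-k+1$. That is only the first half of the paper's proof.

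Passing from this dichotomy to ``$a_A$ is constant'' needs a global exchange argument, and ``swap positions $k-1$, $k$ and around $a_A$'' does not supply one. Here is the missing step. Fix $A$ with $x=a_A$, and take $b\in A\setminus\{x\}$ and $z\notin A$. The set $A\setminus\{b\}$ is not of degree one, so it has full degree, hence $A'=(A\setminus\{b\})\cup\{z\}\in\mathcal F$. Now $A'\setminus\{x\}$ lies in $A'$ and also in $(A\setminus\{x\})\cup\{z\}$, which is not in $\mathcal F$. So its degree is strictly between $0$ and $n-k+1$, hence equal to $1$, i.e.\ $a_{A'}=x$. Iterating such exchanges reaches every $k$-set containing $x$. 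Thus the star at $x$ lies in $\mathcal F$, and the size condition forces equality.

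Any construction of an unhit ordering when $a_A\neq a_B$ would have to find a $(k-1)$-set of intermediate degree, and locating one needs a chain of this kind. As written, your direct proof is incomplete at its only nontrivial point.
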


\begin{proof}
Put
\[
 q=n-k,\qquad M=\binom{n-1}{k-1}.
\]
By hypothesis, each member of $\mathcal F$ contains
exactly one $(k-1)$-subset of degree one.
Each such subset belongs to exactly one member of
$\mathcal F$, so there are exactly $M$ such subsets.

We count pairs $(T,C)$ satisfying
\[
 T\in\binom{[n]}{k-1},\qquad
 C\in\binom{[n]}{k}\setminus\mathcal F,\qquad
 T\subseteq C.
\]
For each $(k-1)$-set $T$, exactly
$q+1-\deg_{\mathcal F}(T)$ of the $k$-sets containing $T$
lie outside $\mathcal F$.
On the other hand, each $k$-set outside $\mathcal F$
contains exactly $k$ $(k-1)$-subsets.
Counting the pairs $(T,C)$ in these two ways gives
\[
 \sum_{T\in\binom{[n]}{k-1}}
 \bigl(q+1-\deg_{\mathcal F}(T)\bigr)
 =k\left(\binom{n}{k}-M\right)=qM,
\]
where the last equality follows from
$\binom{n}{k}=\frac{n}{k}M$ and $q=n-k$.
The $M$ subsets of degree one each contribute $q$
to this sum.
Their total contribution is  $qM$, giving
\[
 \sum_{T\in\binom{[n]}{k-1},
                 \deg_{\mathcal F}(T)\ne1}
 \bigl(q+1-\deg_{\mathcal F}(T)\bigr)=0.
\]
Every summand is nonnegative.
Thus, every $(k-1)$-set whose degree is not one
has degree $q+1$.
Consequently,
\[
 \deg_{\mathcal F}(T)\in\{1,q+1\}
 \qquad\text{for every }T\in\binom{[n]}{k-1}.
\]

Fix $A\in\mathcal F$, and let $x\in A$ be the unique
point such that
$
 \deg_{\mathcal F}(A\setminus\{x\})=1.
$
We show that every $k$-set containing $x$
belongs to $\mathcal F$.

Let $D\in\binom{[n]}{k}$ contain $x$.
Since $|A|=|D|$, we may write
\[
 A\setminus D=\{a_1,\ldots,a_h\},
 \qquad
 D\setminus A=\{y_1,\ldots,y_h\}.
\]
For $0\le i\le h$, put
\[
 A_i=
 \bigl(A\setminus\{a_1,\ldots,a_i\}\bigr)
 \cup\{y_1,\ldots,y_i\}.
\]
Thus, $A_0=A$ and $A_h=D$.
Since $x\in A\cap D$, none of the points $a_i$ equals $x$,
and every $A_i$ contains $x$.
We prove by induction on $i$ that
\[
 A_i\in\mathcal F
 \qquad\text{and}\qquad
 \deg_{\mathcal F}(A_i\setminus\{x\})=1.
\]
The assertion holds for $i=0$ by the choice of $A$ and $x$.
Suppose it holds for $i-1$, where $1\le i\le h$.
The set $A_{i-1}$ has exactly one $(k-1)$-subset
of degree one, namely $A_{i-1}\setminus\{x\}$.
Since $a_i\ne x$, its subset
$A_{i-1}\setminus\{a_i\}$ does not have degree one.
Since every $(k-1)$-set has degree either $1$ or $q+1$,
we have
\[
 \deg_{\mathcal F}(A_{i-1}\setminus\{a_i\})=q+1.
\]
Thus, all $k$-sets containing
$A_{i-1}\setminus\{a_i\}$ belong to $\mathcal F$.
In particular,
\[
 A_i=(A_{i-1}\setminus\{a_i\})\cup\{y_i\}
 \in\mathcal F.
\]
To determine the degree of $A_i\setminus\{x\}$, put
\[
 C_i=(A_{i-1}\setminus\{x\})\cup\{y_i\}.
\]
The point $y_i$ lies outside $A_{i-1}$,
so $C_i$ is a $k$-set distinct from $A_{i-1}$.
By the induction hypothesis,
$A_{i-1}\setminus\{x\}$ has degree one.
Thus, $A_{i-1}$ is the only member of $\mathcal F$
containing $A_{i-1}\setminus\{x\}$.
Since $C_i$ also contains this subset and
$C_i\ne A_{i-1}$, we have $C_i\notin\mathcal F$.
Moreover,
\[
 A_i\setminus\{x\}
 =
 (A_{i-1}\setminus\{x,a_i\})\cup\{y_i\}
 \subseteq C_i.
\]
Hence, $A_i\setminus\{x\}$ has an extension in $\mathcal F$,
namely $A_i$, and an extension outside $\mathcal F$,
namely $C_i$.
Consequently,
\[
 1\le\deg_{\mathcal F}(A_i\setminus\{x\})\le q.
\]
Since its degree must be either $1$ or $q+1$, we obtain
$
 \deg_{\mathcal F}(A_i\setminus\{x\})=1.
$
This completes the induction.
In particular, $D=A_h\in\mathcal F$.

As $D$ was an arbitrary $k$-set containing $x$, we have
$
 \Big\{D\in\binom{[n]}{k}:x\in D\Big\}\subseteq\mathcal F.
$
The family on the left has size
$\binom{n-1}{k-1}=|\mathcal F|$.
Therefore, the two families are equal, and $\mathcal F$
is a full star.
\end{proof}

We now apply the preceding lemmas to prove
Theorem~\ref{thm:triangle}.

\begin{proof}[Proof of Theorem~\ref{thm:triangle}]
Put
\[
 q=n-k,\qquad
 m=|\mathcal F|,\qquad
 M=\binom{n-1}{k-1},\qquad
 \Delta=M-m,
\]
and
\[
 K=\binom{k}{2},
 \qquad
 c=\frac{q-1}{2q}.
\]
Since $n>2k$, we have $q>k$.

\textbf{We first bound the weighted sum}
\[
 \mathcal W
 =\sum_{A\in\mathcal F}\bigl(p_1(A)+c\,p_2(A)\bigr).
\]
Let $a_j$ denote the number of sets
$T\in\binom{[n]}{k-1}$ with $\deg_{\mathcal F}(T)=j$,
for $0\le j\le q+1$.
We count pairs $(T,C)$ satisfying
\[
 T\in\binom{[n]}{k-1},\qquad
 C\in\binom{[n]}{k}\setminus\mathcal F,\qquad
 T\subseteq C.
\]
For each $T$ of degree $j$, exactly $q+1-j$ of the
$k$-sets containing $T$ lie outside $\mathcal F$.
Thus, counting by $T$ gives
$\sum_{j=0}^{q+1}(q+1-j)a_j$ pairs.
On the other hand, each $k$-set outside $\mathcal F$
contains exactly $k$ $(k-1)$-subsets, so counting by $C$
gives $k\left(\binom{n}{k}-m\right)$ pairs.
Since all terms in the first sum are nonnegative,
retaining only those with $j=1$ and $j=2$ gives
\[
 qa_1+(q-1)a_2
 \le \sum_{j=0}^{q+1}(q+1-j)a_j=k\left(\binom{n}{k}-m\right)=qM+k\Delta,
\]
where the last equality follows from $m=M-\Delta$,
$\binom{n}{k}=\frac{n}{k}M$, and $q=n-k$.
For $i\in\{1,2\}$, each $(k-1)$-set of degree $i$
contributes $i$ to $\sum_{A\in\mathcal F}p_i(A)$.
Hence,
\[
 \sum_{A\in\mathcal F}p_1(A)=a_1,
 \qquad
 \sum_{A\in\mathcal F}p_2(A)=2a_2.
\]
Dividing the preceding bound by $q$ yields
\begin{equation}\label{eq:weighted}
 \mathcal W\le M+\frac{k}{q}\Delta.
\end{equation}

\textbf{For each $D\in\binom{[n]}{2}$, define}
\[
 \mathcal H_D
 =\{A\setminus D:A\in\mathcal F,\ D\subseteq A\}
 \subseteq\binom{[n]\setminus D}{k-2}.
\]
Let $\mathcal H_D^*$ be the subfamily of $\mathcal H_D$
consisting of those sets $A\setminus D$ for which either
\[
 D\cap P_1(A)\ne\emptyset,
\]
or
\[
 D=\{a,b\}\subseteq P_2(A)
 \qquad\text{and}\qquad
 y_A(a)=y_A(b).
\]
Finally, set
\[
 \mathcal G_D=\mathcal H_D\setminus\mathcal H_D^*.
\]

Suppose that $A\setminus D$ and $B\setminus D$
are disjoint members of $\mathcal H_D$.
Then $A\cap B=D$.
Lemma~\ref{lem:localrefined}, applied to $(A,B)$ and $(B,A)$,
shows that both $A\setminus D$ and $B\setminus D$
belong to $\mathcal H_D^*$.
Since every disjoint pair in $\mathcal H_D$
has both members in $\mathcal H_D^*$,
each member of
$\mathcal G_D=\mathcal H_D\setminus\mathcal H_D^*$
intersects every member of $\mathcal H_D$,
so the two families are cross-intersecting.

For $A\in\mathcal F$ and $y\in[n]\setminus A$, define
\[
 c_y(A)=|\{a\in P_2(A):y_A(a)=y\}|.
\]
Since each $a\in P_2(A)$ has a unique associated point
$y_A(a)\notin A$, we have
\[
 \sum_{y\notin A}c_y(A)=p_2(A).
\]
Let
\[
 h_A=
 \left|\left\{
 D\in\binom{A}{2}:A\setminus D\in\mathcal H_D^*
 \right\}\right|.
\]
We count these $2$-element subsets $D$ according to
the two conditions in the definition of $\mathcal H_D^*$.

First, consider those satisfying $D\cap P_1(A)\ne\emptyset$.
For each $a\in P_1(A)$, there are exactly $k-1$ choices
of $b\in A\setminus\{a\}$, giving $k-1$ sets $D=\{a,b\}$.
Summing over $a\in P_1(A)$ counts each set meeting
$P_1(A)$ once, except that each set contained in
$P_1(A)$ is counted twice.
There are $\binom{p_1(A)}{2}$ sets of the latter type, so
\[
 \left|\left\{
 D\in\binom{A}{2}:D\cap P_1(A)\ne\emptyset
 \right\}\right|
 =(k-1)p_1(A)-\binom{p_1(A)}{2}.
\]

Next, we count the sets $D=\{a,b\}\in\binom{P_2(A)}{2}$
satisfying $y_A(a)=y_A(b)$.
For each fixed $y\notin A$, the set
$
 \{a\in P_2(A):y_A(a)=y\}
$
has exactly $c_y(A)$ elements.
Choosing two distinct elements from this set gives
$\binom{c_y(A)}{2}$ sets $D=\{a,b\}$ with
$y_A(a)=y_A(b)=y$.
For every $D=\{a,b\}\in\binom{P_2(A)}{2}$ satisfying
$y_A(a)=y_A(b)$, the common value
$y=y_A(a)=y_A(b)$ is uniquely determined.
Thus, summing over $y\notin A$ counts each qualifying
set $D$ exactly once, giving
\[
 \left|\left\{
 \{a,b\}\in\binom{P_2(A)}{2}:y_A(a)=y_A(b)
 \right\}\right|
 =\sum_{y\notin A}\binom{c_y(A)}{2}.
\]

Since $P_1(A)\cap P_2(A)=\emptyset$, no set
$D\in\binom{P_2(A)}{2}$ meets $P_1(A)$.
By the definition of $\mathcal H_D^*$, the sets counted
by $h_A$ are precisely those meeting $P_1(A)$ and those
contained in $P_2(A)$ whose two elements have the same
associated point.
These two collections are disjoint, so
\begin{equation}\label{eq:marks}
 h_A
 =(k-1)p_1(A)-\binom{p_1(A)}{2}
 +\sum_{y\notin A}\binom{c_y(A)}{2}.
\end{equation}

Let $\mathcal E$ consist of those $A\in\mathcal F$
such that $P_2(A)=A$ and all the points $y_A(a)$,
$a\in A$, are equal.
Put $e=|\mathcal E|$.

\textbf{For every $A\in\mathcal F\setminus\mathcal E$, we claim that}
\begin{equation}\label{eq:pointgap}
 h_A\le(k-1)\bigl(p_1(A)+c\,p_2(A)\bigr),
\end{equation}
with strict inequality if $p_2(A)>0$ or $p_1(A)\ge2$.

By \eqref{eq:marks}, the difference between
the right-hand side and the left-hand side is
\begin{align}
 \binom{p_1(A)}{2}
 +\frac{p_2(A)}{2}
   \left(k-p_2(A)-\frac{k-1}{q}\right)
 +\binom{p_2(A)}{2}
 -\sum_{y\notin A}\binom{c_y(A)}{2}.\label{eq:gap}
\end{align}
Observe that $p_2(A)\leq k$ and
\[
 \binom{p_2(A)}{2}
 -\sum_{y\notin A}\binom{c_y(A)}{2}
 =
 \left|\left\{
 \{a,b\}\in\binom{P_2(A)}{2}:y_A(a)\ne y_A(b)
 \right\}\right|
 \ge0.
\]

If $p_2(A)\le k-1$, then
\[
 k-p_2(A)-\frac{k-1}{q}
 \ge1-\frac{k-1}{q}>0.
\]
Thus, the first two terms in \eqref{eq:gap}
and the sum of its last two terms are nonnegative.
The second term is positive when $p_2(A)>0$,
and the first is positive when $p_1(A)\ge2$.

Now suppose that $p_2(A)=k$ and $A\notin\mathcal E$. Then $P_2(A)=A$.
Partition $A$ according to the values of $y_A(a)$.
There are at least two nonempty parts, since
$P_2(A)=A$ and $A\notin\mathcal E$ imply that
the values $y_A(a)$, $a\in A$, are not all equal.
Choose one part of size $s$, where $1\le s\le k-1$.
Let $S$ be the chosen part, with $|S|=s$.
For every $a\in S$ and $b\in A\setminus S$, the points
$a$ and $b$ lie in different parts, so
$y_A(a)\ne y_A(b)$.
There are exactly $s(k-s)$ sets $D=\{a,b\}$ of this form.
Since
$\binom{k}{2}-\sum_{y\notin A}\binom{c_y(A)}{2}$
counts all $2$-element subsets of $A$ whose elements
have different values of $y_A$, we obtain
\[
 \binom{k}{2}
 -\sum_{y\notin A}\binom{c_y(A)}{2}
 \ge s(k-s)\ge k-1.
\]
Since $p_1(A)=0$, the expression in \eqref{eq:gap}
is at least
$
 k-1-\frac{K}{q}>0,
$
where $K=\binom{k}{2}$.
This proves the claim.

For $A\in\mathcal E$, we have $P_2(A)=A$, and the values
$y_A(a)$, $a\in A$, are all equal.
Every $D=\{a,b\}\in\binom{A}{2}$ satisfies
$D\subseteq P_2(A)$ and $y_A(a)=y_A(b)$,
so $A\setminus D\in\mathcal H_D^*$.
We obtain
\[
 p_1(A)=0,\qquad p_2(A)=k,\qquad
 h_A=\binom{k}{2}=K.
\]
Therefore,
\[
 h_A-(k-1)\bigl(p_1(A)+c\,p_2(A)\bigr)
 =K-(k-1)ck=\frac{K}{q}.
\]

We now sum separately over $\mathcal F\setminus\mathcal E$
and $\mathcal E$.
Using \eqref{eq:pointgap} for $A\in\mathcal F\setminus\mathcal E$
and the preceding identity for $A\in\mathcal E$, we obtain
\begin{equation}\label{eq:totalmarks}
 \begin{aligned}
 \sum_{A\in\mathcal F}h_A
 &=\sum_{A\in\mathcal F\setminus\mathcal E}h_A
   +\sum_{A\in\mathcal E}h_A\\
 &\le (k-1)\sum_{A\in\mathcal F\setminus\mathcal E}
       \bigl(p_1(A)+c\,p_2(A)\bigr)
   +\sum_{A\in\mathcal E}
       \left((k-1)\bigl(p_1(A)+c\,p_2(A)\bigr)
       +\frac{K}{q}\right)\\
 &=(k-1)\sum_{A\in\mathcal F}
       \bigl(p_1(A)+c\,p_2(A)\bigr)
   +\frac{K}{q}|\mathcal E|\\
 &=(k-1)\mathcal W+\frac{K}{q}e.
 \end{aligned}
\end{equation}

\textbf{We next apply Lemma~\ref{lem:markednew} to
$\mathcal G_D$ and $\mathcal H_D$ for each
$D\in\binom{[n]}{2}$.}
Define
\[
 \mathcal T_D
 =\{A\setminus D: A\in\mathcal E,\ D\subseteq A\},
 \qquad
 e_D=|\mathcal T_D|.
\]
The definition of $\mathcal E$ gives
$\mathcal T_D\subseteq\mathcal H_D^*$.

Let $R\in\mathcal T_D$, and write $R=A\setminus D$,
where $A\in\mathcal E$ and $D\subseteq A$.
Let $y$ be the common value of $y_A(a)$ for $a\in A$.
Since $P_2(A)=A$, for each $a\in R$, the only members
of $\mathcal F$ containing $A\setminus\{a\}$ are
\[
 A
 \quad\text{and}\quad
 (A\setminus\{a\})\cup\{y\}.
\]
Both sets contain $D$, since $a\notin D$.
Fix $a\in R$. Since $a\notin D$ and $y\notin A$,
the sets
\[
 R
 \quad\text{and}\quad
 (R\setminus\{a\})\cup\{y\}
\]
both belong to $\mathcal H_D$ and contain
$R\setminus\{a\}$.
Indeed, their unions with $D$ are $A$ and
$(A\setminus\{a\})\cup\{y\}$, respectively,
both of which belong to $\mathcal F$.
To show that there are no other such members,
let $B\in\mathcal H_D$ contain $R\setminus\{a\}$.
By the definition of $\mathcal H_D$,
\[
 B\cup D\in\mathcal F,
 \qquad
 A\setminus\{a\}
 =(R\setminus\{a\})\cup D
 \subseteq B\cup D.
\]
Since $a\in P_2(A)$, we have
$\deg_{\mathcal F}(A\setminus\{a\})=2$.
Hence, $B\cup D$ must be either $A$ or
$(A\setminus\{a\})\cup\{y\}$.
Using $B=(B\cup D)\setminus D$, we obtain
\[
 B=R
 \quad\text{or}\quad
 B=(R\setminus\{a\})\cup\{y\}.
\]
This holds for every $a\in R$, with the same point
$y\notin A=R\cup D$, and verifies the additional
condition in Lemma~\ref{lem:markednew}.

The members of $\mathcal H_D$ are $(k-2)$-subsets
of $[n]\setminus D$.
After relabelling the elements of $[n]\setminus D$,
we may regard $\mathcal H_D$ as a family of
$(k-2)$-subsets of $[n-2]$.
Since $k\ge4$ and $n>2k$, we have
\[
 k-2\ge2,\qquad
 n-2\ge2(k-2)+3,\qquad
 (n-2)-(k-2)=q.
\]
We have already shown that $\mathcal G_D$ and
$\mathcal H_D$ are cross-intersecting and that
$\mathcal T_D$ satisfies the additional condition.
Applying Lemma~\ref{lem:markednew} with $n$ and $k$
replaced by $n-2$ and $k-2$, respectively, gives
\begin{equation}\label{eq:linkcount}
 |\mathcal H_D|
 \le\binom{n-3}{k-3}
   +\frac{q}{n-2}|\mathcal H_D^*|
   -\frac{e_D}{n-2},
\end{equation}
with strict inequality whenever $e_D>0$.

Each $A\in\mathcal F$ contains exactly
$K=\binom{k}{2}$ subsets $D$ of size two.
For each of these choices, $A\setminus D$ contributes
one to $|\mathcal H_D|$.
By definition, exactly $h_A$ of these choices satisfy
$A\setminus D\in\mathcal H_D^*$.
Similarly, each $A\in\mathcal E$ contributes one
to $e_D$ for each of its $K$ subsets $D$ of size two.
Summing over $D\in\binom{[n]}{2}$ gives
\[
 \sum_{D\in\binom{[n]}{2}}|\mathcal H_D|=Km,\qquad
 \sum_{D\in\binom{[n]}{2}}|\mathcal H_D^*|=\sum_{A\in\mathcal F}h_A,\qquad
 \sum_{D\in\binom{[n]}{2}}e_D=Ke.
\]
Summing \eqref{eq:linkcount} over
$D\in\binom{[n]}{2}$, we obtain
\[
 Km
 \le\binom{n}{2}\binom{n-3}{k-3}
   +\frac{q}{n-2}\sum_{A\in\mathcal F}h_A
   -\frac{Ke}{n-2}.
\]
Substituting \eqref{eq:totalmarks} and cancelling
the terms involving $e$ gives
\begin{align}
 Km
 &\le\binom{n}{2}\binom{n-3}{k-3}
   +\frac{q}{n-2}
       \left((k-1)\mathcal W+\frac{K}{q}e\right)
   -\frac{Ke}{n-2}
 \nonumber\\
 &=\binom{n}{2}\binom{n-3}{k-3}
   +\frac{q(k-1)}{n-2}\mathcal W.
 \label{eq:cancellation}
\end{align}

We use the identity
\[
 \binom{n}{2}\binom{n-3}{k-3}
 +\frac{q(k-1)}{n-2}M=KM.
\]
Indeed, dividing by $M$ reduces this identity to
\[
 \frac{n(k-1)(k-2)}{2(n-2)}
 +\frac{q(k-1)}{n-2}
 =\frac{k(k-1)}{2},
\]
which follows from $q=n-k$.
Combining \eqref{eq:cancellation} with \eqref{eq:weighted}
and using the preceding identity, we obtain
\[
 \begin{aligned}
 Km
 &\le\binom{n}{2}\binom{n-3}{k-3}
   +\frac{q(k-1)}{n-2}
      \left(M+\frac{k}{q}\Delta\right)\\
 &=KM+\frac{k(k-1)}{n-2}\Delta=KM+\frac{2K}{n-2}\Delta.
 \end{aligned}
\]
Since $m=M-\Delta$, this is equivalent to
\[
 0\le K\left(1+\frac{2}{n-2}\right)\Delta
   =\frac{nK}{n-2}\Delta.
\]
The coefficient of $\Delta$ is positive, so
$\Delta\ge0$, proving $m\le M$.

\textbf{Finally, suppose that $m=M$.}
Then $\Delta=0$, and the chain of estimates leading
to the bound
\[
 Km\le KM+\frac{2K}{n-2}\Delta
\]
begins and ends with $KM$.
Equality must hold in every inequality used in this chain,
including \eqref{eq:weighted}, \eqref{eq:totalmarks},
and \eqref{eq:linkcount} for every $D\in\binom{[n]}{2}$.

If $e>0$, the identity
$\sum_{D\in\binom{[n]}{2}}e_D=Ke$ gives
$e_D>0$ for some $D\in\binom{[n]}{2}$.
For this $D$, inequality \eqref{eq:linkcount} is strict,
contradicting equality in the summed bound.
Hence, $e=0$ and $\mathcal E=\emptyset$.
With $\mathcal E=\emptyset$, equality in
\eqref{eq:totalmarks} forces equality in
\eqref{eq:pointgap} for every $A\in\mathcal F$.
The strictness assertion following \eqref{eq:pointgap}
gives
\[
 p_2(A)=0,\qquad p_1(A)\le1
 \qquad\text{for every }A\in\mathcal F.
\]
Equality in \eqref{eq:weighted}, together with
$p_2(A)=0$ for every $A\in\mathcal F$, gives
\[
 \sum_{A\in\mathcal F}p_1(A)=\mathcal W=M=m.
\]
The $m$ summands are at most one and sum to $m$,
so $p_1(A)=1$ for every $A\in\mathcal F$.
Lemma~\ref{lem:rigidity} now shows that
$\mathcal F$ is a full star.

Conversely, a full star has size $M$.
Every three of its members have a common point,
so it contains no $2$-simplex-cluster.

\end{proof}

\section{Completion of the proof}\label{sec:completion}

\subsection{Disjoint pairs in the Kneser graph}

We begin with an elementary observation that will be used
to characterize equality in the spectral estimate below.

\begin{lemma}\label{lem:boolean}
Let $n,r$ be integers with $2\le r\le n-2$, and let
$f:\binom{[n]}{r}\longrightarrow\{0,1\}$.
Suppose that there exist real numbers $a,b_1,\ldots,b_n$
such that
\[
 f(R)=a+\sum_{i\in R}b_i
 \qquad\text{for every }R\in\binom{[n]}{r}.
\]
Then $f$ is constant, or there exists $x\in[n]$ such that
one of the identities
\[
 f(R)=\mathbf1_{x\in R}
 =
 \begin{cases}
  1, & x\in R,\\
  0, & x\notin R,
 \end{cases}
 \qquad\text{or}\qquad
 f(R)=\mathbf1_{x\notin R}=1-\mathbf1_{x\in R}
\]
holds for every $R\in\binom{[n]}{r}$.
\end{lemma}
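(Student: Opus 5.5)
The plan is to extract the differences $b_i-b_j$ by comparing values of $f$ on two $r$-sets that differ by a single swap, and then use the Boolean constraint to force all these differences into a rigid pattern. For distinct $i,j\in[n]$ and any $(r-1)$-set $S\subseteq[n]\setminus\{i,j\}$, which exists because $r-1\le n-2$, we have
\[
 f(S\cup\{i\})-f(S\cup\{j\})=b_i-b_j .
\]
The left-hand side lies in $\{-1,0,1\}$, so $b_i-b_j\in\{-1,0,1\}$ for all $i,j$. Consequently the values $b_i$ take at most two distinct values, and if there are two, they differ by exactly $1$. If three distinct values occurred, two of them would differ by $2$, which is impossible.

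If all $b_i$ are equal, then $f(R)=a+rb_1$ is constant, and we are done. Otherwise write $X=\{i:b_i=\beta+1\}$ and $Y=\{i:b_i=\beta\}$, both nonempty, so that
\[
 f(R)=a+r\beta+|R\cap X| .
\]
Since $f$ takes values in $\{0,1\}$, the quantity $|R\cap X|$ can take at most two values as $R$ ranges over $\binom{[n]}{r}$, and these values must be consecutive.

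The main step is to show that $|X|=1$ or $|Y|=1$. The attainable values of $|R\cap X|$ form the full integer interval
\[
 \bigl[\max(0,\,r-|Y|),\ \min(r,\,|X|)\bigr].
\]
This interval must have length at most $1$. Suppose $|X|\ge2$ and $|Y|\ge2$. Then $\min(r,|X|)\ge2$, because $r\ge2$. Likewise $\max(0,r-|Y|)\le r-2$, and it is also at most $\max(0,\,n-2-|Y|)$. A cleaner way to finish this case is to use the swap argument twice. Because $2\le r\le n-2$, we can pick $R$ containing two points $x_1,x_2\in X$ and missing two points $y_1,y_2\in Y$. This requires the remaining $r-2$ points to be chosen from $n-4$ points, which is possible when $n\ge r+2$, as we have. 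Now replace $x_1,x_2$ by $y_1,y_2$. This changes $f$ by $-2$, a contradiction. Hence $|X|=1$ or $|Y|=1$.

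It remains to pin down the constant. If $X=\{x\}$, then $f(R)=c+\mathbf 1_{x\in R}$ with $c=a+r\beta$. Both values occur, since $1\le r\le n-1$. So $\{c,c+1\}=\{0,1\}$, which gives $c=0$ and $f=\mathbf 1_{x\in R}$. If $Y=\{y\}$, then $|R\cap X|=r-\mathbf 1_{y\in R}$. The same reasoning gives $f=\mathbf 1_{y\notin R}$, and we take $x=y$. The only delicate point in the whole argument is ensuring that the required sets $S$ and $R$ exist, and this is exactly where the hypothesis $2\le r\le n-2$ is used.
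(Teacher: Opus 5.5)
Your proposal is correct and follows the paper's proof essentially step for step: the single swap gives $b_i-b_j\in\{-1,0,1\}$, so the coefficients take two values differing by $1$; the case $|X|,|Y|\ge2$ is ruled out; and the constant is then pinned down. Your double swap $S\cup\{x_1,x_2\}\to S\cup\{y_1,y_2\}$, which forces a jump of $2$, is a two-set version of the paper's chain $S\cup\{w,z\},\,S\cup\{u,w\},\,S\cup\{u,v\}$ taking three distinct values. You can delete the unfinished interval digression before it, since the swap argument alone settles that case.
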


\begin{proof}
For distinct $i,j\in[n]$, choose
$T\in\binom{[n]\setminus\{i,j\}}{r-1}$.
The assumed representation of $f$ gives
\[
 b_i-b_j
 =f(T\cup\{i\})-f(T\cup\{j\})
 \in\{-1,0,1\}.
\]
If $b_1=\cdots=b_n=\beta$, then
$f(R)=a+r\beta$ for every $R$, so $f$ is constant.

Suppose that the coefficients are not all equal, and put
$\beta=\min_{i\in[n]}b_i$.
Choose $j\in[n]$ such that $b_j=\beta$.
For every $i\in[n]$, the preceding identity gives
\[
 b_i-\beta=b_i-b_j\in\{-1,0,1\}.
\]
Since $\beta$ is the smallest coefficient, we also have
$b_i-\beta\ge0$, so $b_i\in\{\beta,\beta+1\}$.
The value $\beta$ occurs at $j$.
As the coefficients are not all equal, some coefficient
is greater than $\beta$ and must equal $\beta+1$.
Thus, the coefficients take exactly these two values.
Define
\[
 P=\{i\in[n]:b_i=\beta+1\},
 \qquad
 a'=a+r\beta.
\]
Then $1\le|P|\le n-1$, and
\[
 f(R)=a'+|R\cap P|
 \qquad\text{for every }R\in\binom{[n]}{r}.
\]

We claim that $|P|=1$ or $n-1$.
Suppose instead that $2\le|P|\le n-2$.
Choose distinct $u,v\in P$ and distinct
$w,z\in[n]\setminus P$.
Since $0\le r-2\le n-4$, we can choose
$
 S\in\binom{[n]\setminus\{u,v,w,z\}}{r-2}.
$
The three $r$-sets obtained by adding
$\{w,z\}$, $\{u,w\}$, and $\{u,v\}$ to $S$ satisfy
\[
 \begin{aligned}
 f(S\cup\{w,z\})&=a'+|S\cap P|,\\
 f(S\cup\{u,w\})&=a'+|S\cap P|+1,\\
 f(S\cup\{u,v\})&=a'+|S\cap P|+2.
 \end{aligned}
\]
These are three distinct values, contradicting
$f(R)\in\{0,1\}$.
This proves the claim.

If $P=\{x\}$, then
\[
 f(R)=a'+\mathbf1_{x\in R}.
\]
There are $r$-sets containing $x$ and $r$-sets avoiding $x$,
so both $a'$ and $a'+1$ occur as values of $f$.
Since both belong to $\{0,1\}$, we have $a'=0$.
Thus, $f(R)=\mathbf1_{x\in R}$ for every $R$.

If $P=[n]\setminus\{x\}$, then
\[
 f(R)=a'+r-\mathbf1_{x\in R}.
\]
The two values $a'+r-1$ and $a'+r$ both occur
and belong to $\{0,1\}$, so $a'+r=1$.
It follows that
\[
 f(R)=1-\mathbf1_{x\in R}
     =\mathbf1_{x\notin R}
 \qquad\text{for every }R\in\binom{[n]}{r}.
\]
This completes the proof.
\end{proof}

For integers $n\ge r\ge1$, the \textit{Kneser graph} $KG(n,r)$
has vertex set $\binom{[n]}{r}$, with two vertices adjacent if and only if
they are disjoint.

\begin{lemma}[{\cite[Corollary~6.6.1]{GodsilMeagher}}]
\label{lem:kneserspectrum}
Let $n,r$ be positive integers with $n>2r$.
The eigenvalues of the adjacency matrix of $KG(n,r)$ are
\[
 \lambda_j=(-1)^j\binom{n-r-j}{r-j},
 \qquad j=0,\ldots,r.
\]
The multiplicity of $\lambda_j$ is
$\binom{n}{j}-\binom{n}{j-1}$, where
$\binom{n}{-1}=0$.
In particular, the least eigenvalue is
\[
 \lambda_1=-\binom{n-r-1}{r-1},
\]
with multiplicity $n-1$.
\end{lemma}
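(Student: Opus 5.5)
The plan is to diagonalise the adjacency matrix $A$ of $KG(n,r)$ on the standard $S_n$-invariant decomposition of $\mathbb R^{\binom{[n]}{r}}$, using inclusion matrices.

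\textbf{Setup.} For $0\le j\le r$, let $W_j$ be the $\binom{[n]}{r}\times\binom{[n]}{j}$ matrix with $(R,J)$-entry $\mathbf 1_{J\subseteq R}$. Let $U_j$ be its column space, so that $U_0\subseteq U_1\subseteq\cdots\subseteq U_r=\mathbb R^{\binom{[n]}{r}}$. The nesting holds because $\binom{r-i}{j-i}W_i=W_jW_{ij}$, where $W_{ij}$ is the $j$-versus-$i$ inclusion matrix. Put $V_j=U_j\cap U_{j-1}^{\perp}$. Then $\mathbb R^{\binom{[n]}{r}}=V_0\oplus\cdots\oplus V_r$ orthogonally.

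\textbf{Dimensions.} I will quote Gottlieb's theorem that $W_j$ has full column rank $\binom nj$ whenever $j\le r\le n-j$. This holds here since $n>2r$. Hence $\dim V_j=\binom nj-\binom n{j-1}$, which gives the claimed multiplicities once each $V_j$ is shown to be an eigenspace.

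\textbf{Key computation.} For a $j$-set $J$ and an $r$-set $R$,
\[
(AW_j)_{R,J}=\bigl|\{R'\in\tbinom{[n]}{r}:R'\cap R=\emptyset,\ J\subseteq R'\}\bigr|=\binom{n-r-j}{r-j}\mathbf 1_{J\cap R=\emptyset}.
\]
Inclusion--exclusion gives $\mathbf 1_{J\cap R=\emptyset}=\sum_{I\subseteq J\cap R}(-1)^{|I|}$, and therefore
\[
AW_j=\binom{n-r-j}{r-j}\sum_{i=0}^{j}(-1)^iW_iW_{ij}^{\mathsf T}.
\]
The terms with $i<j$ have columns in $U_{j-1}$. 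This shows that $A$ maps $U_j$ into $U_j$, and since $A$ is symmetric it also preserves $U_{j-1}^\perp$, hence $V_j$.

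Take $v\in V_j$ and write $v=W_jg$. Then
\[
Av\equiv(-1)^j\binom{n-r-j}{r-j}W_jg=\lambda_j v \pmod{U_{j-1}}.
\]
Because $Av\in V_j\perp U_{j-1}$, the congruence is an equality, so $Av=\lambda_j v$ with $\lambda_j=(-1)^j\binom{n-r-j}{r-j}$.

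\textbf{Least eigenvalue.} For $n>2r$ the value $\binom{n-r-j}{r-j}$ is strictly decreasing in $j$. The ratio of consecutive terms is $(r-j)/(n-r-j)<1$. So among the negative eigenvalues (odd $j$), $\lambda_1=-\binom{n-r-1}{r-1}$ has the largest absolute value. If $r\ge3$, then $\lambda_3\neq\lambda_1$, so this eigenvalue has multiplicity exactly $\binom n1-\binom n0=n-1$. For $r=1$ or $r=2$ there is nothing more to check.

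\textbf{Main obstacle.} The only nontrivial input is the rank statement for $W_j$. I would simply cite it. Alternatively, it follows from the eigenvalue argument itself: the $\lambda_j$ are distinct, $A$ is invertible, and $W_j^{\mathsf T}W_j$ is positive definite by a similar Johnson-scheme computation.
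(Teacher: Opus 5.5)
Your proof is correct. The paper gives no proof of this lemma; it quotes Godsil--Meagher and later uses only the value $\lambda_1=-\binom{n-r-1}{r-1}$ and the multiplicity $n-1$.

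Your argument is the standard inclusion-matrix derivation. It consists of the filtration $U_0\subseteq\cdots\subseteq U_r$, the inclusion--exclusion identity $AW_j=\binom{n-r-j}{r-j}\sum_{i\le j}(-1)^iW_iW_{ij}^{\mathsf T}$, and Gottlieb's theorem for $\dim U_j=\binom nj$. It is self-contained apart from the rank theorem. It also gives something the citation does not: the eigenspaces, not just the eigenvalues. In particular, $V_1=U_1\cap\mathbf 1^{\perp}$ is exactly the span of the functions $R\mapsto\mathbf 1_{i\in R}-r/n$. The paper re-verifies this by hand in the equality part of Lemma~\ref{lem:kneser}, by computing $\mathsf A u_i=\lambda u_i$ and checking linear independence.

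\textbf{Distinctness.} The multiplicity formula for every $j$ needs the $\lambda_j$ to be pairwise distinct, so that $V_j$ is the whole $\lambda_j$-eigenspace. This follows from what you proved: eigenvalues for opposite parities have opposite signs, and $|\lambda_j|$ is strictly decreasing. It is this fact, not merely $\lambda_3\neq\lambda_1$, that settles the multiplicity of $\lambda_1$ once $r\ge5$. Also, $\dim V_j>0$ because $j\le r<n/2$, so each $\lambda_j$ really occurs.

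\textbf{Your fallback for the rank.} As written it is only a sketch. Distinctness of the $\lambda_j$ and invertibility of $A$ say nothing about $\dim V_j$. Positive definiteness of $W_j^{\mathsf T}W_j$ is equivalent to the rank claim itself, unless you actually compute its eigenvalues. A precise version runs as follows.
\begin{itemize}
\item For $1\le j\le r$, let $\overline W_j$ be the $r$-set versus $j$-set disjointness matrix. Then $W_j^{\mathsf T}\overline W_j=\binom{n-2j}{r-j}A'$, where $A'$ is the adjacency matrix of $KG(n,j)$.
\item Your filtration argument, applied to $KG(n,j)$, uses no rank information. It shows that every eigenvalue of $A'$ has the form $\pm\binom{n-j-i}{j-i}$, which is nonzero since $n\ge2j$. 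Hence $A'$ is invertible.
\item Since $\binom{n-2j}{r-j}\neq0$ for $r\le n-j$, the matrix $W_j^{\mathsf T}\overline W_j$ is invertible. Therefore $W_j$ has full column rank $\binom nj$.
\end{itemize}
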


We use Lemma~\ref{lem:kneserspectrum} to obtain the
following lower bound on the number of ordered disjoint
pairs in an $r$-uniform family.

\begin{lemma}\label{lem:kneser}
Let $n,r$ be positive integers with $n>2r$, and put
$k=n-r$, $N=\binom{n}{r}$, and $D=\binom{k}{r}$.
For $\mathcal G\subseteq\binom{[n]}{r}$, put
$m=|\mathcal G|$ and $\alpha=m/N$, and define
$
 I=\left|\left\{
 (R,S)\in\mathcal G\times\mathcal G:
 R\cap S=\emptyset
 \right\}\right|.
$
Then
\begin{equation}\label{eq:kneser}
 I\ge mD\left(\frac nk\alpha-\frac rk\right).
\end{equation}
If equality holds, then the indicator function
$f=\mathbf1_{\mathcal G}$, defined by $f(R)=1$ when
$R\in\mathcal G$ and $f(R)=0$ otherwise, can be written as
\[
 f(R)=a+\sum_{i\in R}b_i
 \qquad\text{for every }R\in\binom{[n]}{r},
\]
for some real numbers $a,b_1,\ldots,b_n$.
\end{lemma}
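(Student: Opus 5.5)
We follow the standard Hoffman-type argument. Let $A$ be the adjacency matrix of $KG(n,r)$ and $f=\mathbf 1_{\mathcal G}\in\mathbb R^{N}$, so that $I=f^{\top}Af$. Decompose $f$ orthogonally along the eigenspaces of Lemma~\ref{lem:kneserspectrum}: $f=f_0+f_1+\cdots+f_r$, where $f_0=\alpha\mathbf 1$ is the projection onto the constant vectors (the $\lambda_0$-eigenspace, $\lambda_0=D$) and $f_j$ lies in the $\lambda_j$-eigenspace. Then
\[
 I=\sum_{j=0}^r\lambda_j\|f_j\|^2\ \ge\ D\|f_0\|^2+\lambda_1\sum_{j\ge1}\|f_j\|^2,
\]
since $\lambda_1$ is the least eigenvalue. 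Here $\|f_0\|^2=\alpha^2N=\alpha m$ and $\sum_{j\ge1}\|f_j\|^2=\|f\|^2-\alpha m=m(1-\alpha)$, because $f$ is $0/1$-valued.

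Next we rewrite $\lambda_1$ in terms of $D$. With $k=n-r$ we have $\binom{n-r-1}{r-1}=\binom{k-1}{r-1}=\frac rk\binom{k}{r}=\frac rk D$. Therefore
\[
 I\ge mD\alpha-\frac rk D m(1-\alpha)=mD\Bigl(\alpha+\frac rk\alpha-\frac rk\Bigr)=mD\Bigl(\frac nk\alpha-\frac rk\Bigr),
\]
using $k+r=n$. This is \eqref{eq:kneser}.

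For the equality case, observe that equality forces $(\lambda_j-\lambda_1)\|f_j\|^2=0$ for all $j\ge2$. For $j\ge2$ and $n>2r$ we have $\lambda_j>\lambda_1$: the even-indexed $\lambda_j$ are positive, and for odd $j\ge3$, $\binom{n-r-j}{r-j}<\binom{n-r-1}{r-1}$. Hence $f_j=0$ for $j\ge2$, and $f$ lies in the span of the $\lambda_0$- and $\lambda_1$-eigenspaces. It remains to identify this span with the space of functions of the form $R\mapsto a+\sum_{i\in R}b_i$. This step needs care and is where I expect the main obstacle.

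Let $V_1$ denote the span of the constant function and the $n$ point-indicator functions $R\mapsto\mathbf 1_{i\in R}$. Its dimension is at most $n$, since $\sum_i\mathbf 1_{i\in R}=r$ is constant. The space $V_1$ is invariant under $A$, because
\[
 (A\mathbf 1_{i\in\cdot})(R)=\#\{S:S\cap R=\emptyset,\ i\in S\}=\mathbf 1_{i\notin R}\binom{k-1}{r-1},
\]
which again lies in $V_1$. On the complement of the constants within $V_1$, namely the functions $\sum b_i\mathbf 1_{i\in R}$ with $\sum b_i=0$, this identity gives $A$ acting as $-\binom{k-1}{r-1}=\lambda_1$. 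That complement has dimension $n-1$ (the inclusion map is injective since $r<n$), which matches the multiplicity $n-1$. So the $\lambda_1$-eigenspace is exactly this space, and the $\lambda_0\oplus\lambda_1$ span equals $V_1$. Consequently $f(R)=a+\sum_{i\in R}b_i$ for all $R$, as claimed.
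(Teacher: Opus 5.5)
Your proposal is correct and follows essentially the same Hoffman-type argument as the paper: bound the non-constant part of $\mathbf 1_{\mathcal G}$ by the least eigenvalue $-\frac rkD$, and in the equality case identify the $\lambda_1$-eigenspace with the span of the centred point indicators via the dimension count $n-1$. The only difference is cosmetic. You write out the full eigenspace decomposition and check $\lambda_j>\lambda_1$ for $j\ge2$ directly, whereas the paper works with $h=f-\alpha\mathbf 1$ and reads this fact off the multiplicity statement in Lemma~\ref{lem:kneserspectrum}.
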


\begin{proof}
Let $\mathsf A$ be the adjacency matrix of $KG(n,r)$.
For real-valued functions $u,v$ on $\binom{[n]}{r}$,
use the inner product and norm
\[
 \langle u,v\rangle
 =\sum_{R\in\binom{[n]}{r}}u(R)v(R),
 \qquad
 \|u\|^2=\langle u,u\rangle.
\]
The matrix $\mathsf A$ acts on these functions by
\[
 (\mathsf A u)(R)
 =\sum_{S\in\binom{[n]}{r},
                  S\cap R=\emptyset}u(S).
\]
Write $\mathbf1$ for the constant function with value one.
Each vertex has exactly $D$ neighbors, so
$\mathsf A\mathbf1=D\mathbf1$.
By Lemma~\ref{lem:kneserspectrum}, the least eigenvalue
of $\mathsf A$ is
\[
 \lambda=-\binom{k-1}{r-1}=-\frac rkD,
\]
with multiplicity $n-1$.

For the indicator function $f=\mathbf1_{\mathcal G}$,
put $h=f-\alpha\mathbf1$.
Since $f$ takes the value one on exactly $m$ vertices
and zero on the remaining vertices, we have
\[
 \langle h,\mathbf1\rangle
 =m-N\alpha=0,\qquad
 \|h\|^2
 =m-2\alpha m+N\alpha^2
  =N\alpha(1-\alpha).
\]
For each $R\in\binom{[n]}{r}$, $(\mathsf A f)(R)$
is the number of members of $\mathcal G$ disjoint from $R$.
Since $f(R)=1$ when $R\in\mathcal G$ and zero otherwise,
\[
 \langle f,\mathsf A f\rangle
 =\sum_{R\in\mathcal G}(\mathsf A f)(R)=\sum_{R\in\mathcal G}
   \left|\{S\in\mathcal G:R\cap S=\emptyset\}\right|
 =I.
\]
Expanding $f=\alpha\mathbf1+h$ and using the symmetry
of $\mathsf A$, we obtain
\[
 \begin{aligned}
 I
 &=\langle\alpha\mathbf1+h,
          \mathsf A(\alpha\mathbf1+h)\rangle=\alpha^2\langle\mathbf1,\mathsf A\mathbf1\rangle
   +2\alpha\langle h,\mathsf A\mathbf1\rangle
   +\langle h,\mathsf A h\rangle\\
 &=ND\alpha^2+2D\alpha\langle h,\mathbf1\rangle
   +\langle h,\mathsf A h\rangle\\
&=ND\alpha^2+\langle h,\mathsf A h\rangle,
 \end{aligned}
\]
where the last two equalities use
$\mathsf A\mathbf1=D\mathbf1$,
$\langle\mathbf1,\mathbf1\rangle=N$, and
$\langle h,\mathbf1\rangle=0$.
Since $\lambda$ is the least eigenvalue of the symmetric
matrix $\mathsf A$, we have
$\langle h,\mathsf A h\rangle\ge\lambda\|h\|^2$.
Substituting $\lambda=-\frac rkD$ and
$\|h\|^2=N\alpha(1-\alpha)$ gives
\[
 \begin{aligned}
 I
 &\ge ND\alpha^2+\lambda\|h\|^2=ND\alpha^2-\frac rkD\,N\alpha(1-\alpha)\\
 &=ND\alpha
   \left(\left(1+\frac rk\right)\alpha-\frac rk\right)=mD\left(\frac nk\alpha-\frac rk\right).
 \end{aligned}
\]

Assume now that equality holds.
Let $E$ be the eigenspace of $\mathsf A$
corresponding to $\lambda$.
Equality in $\langle h,\mathsf A h\rangle\ge\lambda\|h\|^2$
implies $h\in E$.
Since $\mathsf A$ is symmetric and $\lambda$ has
multiplicity $n-1$, we have $\dim E=n-1$.
For each $i\in[n]$, define
\[
 u_i(R)=\mathbf1_{i\in R}-\frac rn,
\]
where $\mathbf1_{i\in R}$ equals one when $i\in R$
and zero otherwise.
If $i\notin R$, exactly $\binom{k-1}{r-1}$
$r$-sets disjoint from $R$ contain $i$;
if $i\in R$, no $r$-set disjoint from $R$ contains $i$.
Since exactly $D$ members of $\binom{[n]}{r}$ are
disjoint from $R$, we obtain
\[
 \begin{aligned}
 (\mathsf A u_i)(R)
 &=\sum_{S\in\binom{[n]}{r},
                   S\cap R=\emptyset}
      \left(\mathbf1_{i\in S}-\frac rn\right)=\binom{k-1}{r-1}(1-\mathbf1_{i\in R})-\frac rnD\\
 &=-\binom{k-1}{r-1}
      \left(\mathbf1_{i\in R}-\frac rn\right)=\lambda u_i(R).
 \end{aligned}
\]
Here the third equality uses
$\binom{k-1}{r-1}=\frac rkD$ and $k=n-r$.
Thus, $u_i\in E$ for every $i\in[n]$.

For every $R\in\binom{[n]}{r}$, we have
\[
 \sum_{i=1}^n u_i(R)
 =\sum_{i=1}^n\mathbf1_{i\in R}-r
 =|R|-r=0.
\]
Hence, $\sum_{i=1}^n u_i$ is the zero function.
Suppose that real numbers $c_1,\ldots,c_{n-1}$ satisfy
$\sum_{\ell=1}^{n-1}c_\ell u_\ell=0$.
For each $i\in[n-1]$, choose
$T\in\binom{[n]\setminus\{i,n\}}{r-1}$.
Evaluating the relation at $T\cup\{i\}$ and
$T\cup\{n\}$ and subtracting gives
\[
 0=\sum_{\ell=1}^{n-1}c_\ell
 \bigl(u_\ell(T\cup\{i\})-u_\ell(T\cup\{n\})\bigr)
 =c_i.
\]
Thus, $c_1=\cdots=c_{n-1}=0$, proving that
$u_1,\ldots,u_{n-1}$ are linearly independent.
Since these functions belong to $E$ and $\dim E=n-1$,
they form a basis of $E$.

As $h\in E$, there exist real numbers $b_1,\ldots,b_n$
such that $h=\sum_{i=1}^n b_i u_i$.
Put $a=\alpha-\frac rn\sum_{i=1}^n b_i$.
Using $f=\alpha\mathbf1+h$, $h=\sum_{i=1}^n b_i u_i$,
and $a=\alpha-\frac rn\sum_{i=1}^n b_i$, we obtain,
for every $R\in\binom{[n]}{r}$,
\[
 f(R)
 =\alpha+\sum_{i=1}^n b_i
      \left(\mathbf1_{i\in R}-\frac rn\right)=\left(\alpha-\frac rn\sum_{i=1}^n b_i\right)
   +\sum_{i\in R}b_i=a+\sum_{i\in R}b_i.
\]
This proves the equality assertion.
\end{proof}

\subsection{Local counting}

The following estimate will be used when $n=2k-d+1$.
Its proof adapts Huang's argument%
~\cite[Lemma~3.6]{Huang} to families containing no
$d$-simplex-cluster, using Currier's local counting
estimates~\cite[Lemmas~1--3]{Currier}.

\begin{lemma}\label{lem:transition}
Let $d\ge3$, $k\ge d+1$, $n\ge2k-d$, and $r=n-k$.
Suppose that $d(r-1)>2r$.
If $\mathcal F\subseteq\binom{[n]}{k}$ contains no
$d$-simplex-cluster, then
\[
 |\mathcal F|\le\binom{n-1}{k-1},
\]
with equality only for a full star.
\end{lemma}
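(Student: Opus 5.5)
We will argue by local counting on $(k-1)$-subsets, in the same spirit as the proof of Theorem~\ref{thm:triangle}, but replacing the link argument by a direct per-set estimate. Put $M=\binom{n-1}{k-1}$, $m=|\mathcal F|$, $\Delta=M-m$ and $r=n-k$, so that every $(k-1)$-set lies in $r+1$ $k$-sets. As in the proof of \eqref{eq:weighted}, count the pairs $(T,C)$ with $T\in\binom{[n]}{k-1}$ and $C\notin\mathcal F$ a $k$-set containing $T$. Keeping only the terms of degree one, this gives
\[
 r\,a_1\le \sum_T\bigl(r+1-\deg_{\mathcal F}(T)\bigr)=rM+k\Delta,
\]
where $a_1=\sum_{A\in\mathcal F}p_1(A)$. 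Thus it suffices to show that a typical $A\in\mathcal F$ has at least one $(k-1)$-subset of degree one, and more precisely that $\sum_A p_1(A)\ge m$ up to a correction that the $k\Delta$ term can absorb. Equality would then force $p_1(A)=1$ for all $A$ and all other degrees equal to $r+1$, and Lemma~\ref{lem:rigidity} finishes the equality case.

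The key local step follows Currier's Lemmas~1--3. Fix $A\in\mathcal F$. We would show that if $d$ of its $(k-1)$-subsets $A\setminus\{a_1\},\dots,A\setminus\{a_d\}$ all have degree at least $2$, then extensions $C_i=(A\setminus\{a_i\})\cup\{y_i\}$ can be chosen so that $C_1,\dots,C_d$, together with a suitable set, form a $d$-simplex-cluster. The natural candidate is a set $B$ containing $\{a_1,\dots,a_d\}$ and avoiding $A\setminus\{a_1,\dots,a_d\}$, which is a $d$-simplex-cluster whenever such a $B\in\mathcal F$ exists and the $y_i$ are controlled. The union has size at most $k+(k-d)$ plus the new points, and this fits within $2k$ because $n=k+r$ and $n\ge 2k-d$. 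Since such a $B$ need not exist, we would instead use $C_1,\dots,C_d$ and $A$ itself, or several $C_i$ sharing a $y$. The goal is a bound of the form: the number of $(k-1)$-subsets of $A$ of degree at least $2$ is controlled, or the degrees of those subsets are small, namely at most $2$, with a common $y$ as in Lemma~\ref{lem:localrefined}.

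Next we would weight the sets. Each $(k-1)$-set $T$ of degree $j$ contributes $r+1-j$ missing extensions. A set $A$ with $p_1(A)=0$ must, by the local step, have many subsets of degree exactly $2$, or of degree well below $r+1$. Each such subset contributes at least $r-1$ to the left-hand side, shared among at most $2$ members. The hypothesis $d(r-1)>2r$ is exactly what makes this work: roughly $d$ subsets, each of degree $2$ and each contributing $r-1$ split over $2$ sets, give at least $d(r-1)/2>r$ per set. This exceeds the budget $r$ per member in $rM+k\Delta$. Hence each $A$ carries weight at least $r$ in a discharging scheme
\[
 \sum_A w(A)\le rM+k\Delta,
\]
with strict inequality unless $p_1(A)=1$ and all other subsets of $A$ are full. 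This yields $rm\le rM+k\Delta$, i.e.\ $r(M-\Delta)\le rM+k\Delta$. That inequality is trivial, so the bookkeeping must instead use all pairs with exact weights. We would therefore redo it as in the proof of Theorem~\ref{thm:triangle}: compare against the identity $\sum_T(r+1-\deg T)=rM+k\Delta$ and show $\sum_T(r+1-\deg T)\ge rm+\varepsilon\cdot(\text{defects})$. This gives $k\Delta\ge r\Delta'$, where $\Delta'$ is related to $\Delta$ through $\binom nk=\frac nkM$. Rearranging exactly produces $m\le M$.

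\textbf{Main obstacle.} The hardest part is the local claim that a member with no degree-one subset, or with $p_1(A)\ge2$, forces defect at least $d(r-1)/2$. This requires building a $d$-simplex-cluster from $d$ extensions while checking that the union stays within $2k$ at $n$ as small as $2k-d$. There are also degenerate configurations to handle, where all extensions use the same new point $y$; the three $C_i$ then share $y$ and the total intersection is nonempty. For these we would try Currier's trick of pairing $A$ with a set containing $y$ and missing $d$ points of $A$.
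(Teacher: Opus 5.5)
There is a genuine gap. Nothing in your plan turns the absence of a $d$-simplex-cluster into a \emph{global} inequality, and the per-member statement you aim for is false. Your configuration $B,C_1,\dots,C_d$, with $C_i=(A\setminus\{a_i\})\cup\{y_i\}$, needs a member $B$ with $A\cap B=\{a_1,\dots,a_d\}$. When no such $B$ exists there is no contradiction at all. The fallback of using $A,C_1,\dots,C_d$ fails, since $A\cap C_1\cap\cdots\cap C_d\supseteq A\setminus\{a_1,\dots,a_d\}\ne\emptyset$.

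Concretely, take $k\ge d+2$ and let $\mathcal F$ consist of $A$ together with all $k$-sets meeting $A$ in $k-1$ points. This family contains no $d$-simplex, since any $d+1$ members share $A$ minus at most $d+1$ points. Yet every $(k-1)$-subset of $A$ has full degree $r+1$, and no member of the family has a $(k-1)$-subset of degree one. So neither ``a typical $A$ has $p_1(A)\ge1$'' nor ``each $A$ carries weight at least $r$'' can be proved member by member. The deficit has to be redistributed globally, and your sketch gives no device for doing so.

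Incidentally, $r(M-\Delta)\le rM+k\Delta$ is not trivial: it is equivalent to $\Delta\ge0$, i.e.\ to $m\le M$. A valid bound $w(A)\ge r$ for every $A$ would have finished the proof; the trouble is that it does not hold.

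The missing idea is to work in links and apply a cross-intersection inequality there. For $D\in\binom{[n]}{d}$, the paper takes $\mathcal H_D=\{A\setminus D:D\subseteq A\in\mathcal F\}$. It lets $\mathcal H_D^*$ consist of those $A\setminus D$ with $D\cap P_1(A)\ne\emptyset$ or $D\subseteq P_2(A)$.

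Your local step is then used only where it is valid. Suppose $A\cap B=D$ and $A\setminus D\notin\mathcal H_D^*$. Then some $A\setminus\{x_1\}$ has degree at least three, so one can take $y_1\ne y_2$. Now $B,A_1,\dots,A_d$ form a $d$-simplex-cluster, with union at most $|A\cup B|+d=2k$. Hence $\mathcal H_D\setminus\mathcal H_D^*$ cross-intersects all of $\mathcal H_D$. Lemma~\ref{lem:markedbasic} then gives $|\mathcal H_D|\le\binom{n-d-1}{k-d-1}+\frac{r}{n-d}|\mathcal H_D^*|$, and this is where the global averaging happens.

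Next, sum over $D$ and bound the number of marked $d$-subsets of $A$ by $\binom{k-1}{d-1}\bigl(p_1(A)+p_2(A)/d\bigr)$. The hypothesis $d(r-1)>2r$, i.e.\ $1/d<(r-1)/(2r)$, lets this be compared with the weighted count $\mathcal W\le M$ from \eqref{eq:transitionweight}. The result is $\binom kd m\le\binom kd M$. That comparison, not the heuristic ``$d$ degree-two subsets per member'', is the actual role of $d(r-1)>2r$. The equality case then follows via Lemma~\ref{lem:rigidity}, as you indicate.
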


\begin{proof}
Put $m=|\mathcal F|$ and $M=\binom{n-1}{k-1}$.
If $m<M$, the desired bound holds strictly.
It remains to consider the case $m\ge M$.
Use the notation $\deg_{\mathcal F}$, $P_i(A)$, and
$p_i(A)$ from Section~\ref{sec:triangle}.

For $j\in\{0,\ldots,r+1\}$, let $a_j$ be the number
of $(k-1)$-sets of degree $j$.
We count pairs $(T,C)$ satisfying
\[
 T\in\binom{[n]}{k-1},\qquad
 C\in\binom{[n]}{k}\setminus\mathcal F,\qquad
 T\subseteq C.
\]
A set $T$ of degree $j$ is contained in exactly
$r+1-j$ choices of $C$, while each such $C$ contains
exactly $k$ choices of $T$.
Thus,
\[
 \begin{aligned}
 r a_1+(r-1)a_2
 &\le\sum_{j=0}^{r+1}(r+1-j)a_j=k\left(\binom{n}{k}-m\right)\\
 &\le k\left(\binom{n}{k}-M\right)=rM.
 \end{aligned}
\]
For $i\in\{1,2\}$, each $(k-1)$-set of degree $i$
contributes $i$ to $\sum_{A\in\mathcal F}p_i(A)$.
Hence,
\[
 \sum_{A\in\mathcal F}p_1(A)=a_1,\qquad
 \sum_{A\in\mathcal F}p_2(A)=2a_2.
\]
It follows that
\begin{equation}\label{eq:transitionweight}
 \mathcal W
 :=\sum_{A\in\mathcal F}
   \left(p_1(A)+\frac{r-1}{2r}p_2(A)\right)=\frac{r a_1+(r-1)a_2}{r}\le M.
\end{equation}

For $D\in\binom{[n]}{d}$, define
\[
 \mathcal H_D
 =\{A\setminus D:A\in\mathcal F,\ D\subseteq A\}
 \subseteq\binom{[n]\setminus D}{k-d}.
\]
Let $\mathcal H_D^*$ consist of the members
$A\setminus D$ for which
\[
 D\cap P_1(A)\ne\emptyset
 \qquad\text{or}\qquad
 D\subseteq P_2(A).
\]
We show that both members of every disjoint pair
in $\mathcal H_D$ belong to $\mathcal H_D^*$.

Suppose that $A\setminus D$ and $B\setminus D$
are disjoint members of $\mathcal H_D$, so
$A\cap B=D$.
Write $D=\{x_1,\ldots,x_d\}$ and suppose that
$A\setminus D\notin\mathcal H_D^*$.
Then every $A\setminus\{x_i\}$ has degree at least two,
and at least one has degree at least three.
After relabeling, assume that
$A\setminus\{x_1\}$ has degree at least three.
We can choose
\[
 A_i=(A\setminus\{x_i\})\cup\{y_i\}\in\mathcal F,
 \qquad y_i\in[n]\setminus A,\quad i\in[d],
\]
with $y_1\ne y_2$.
Indeed, choose $y_2,\ldots,y_d$ first.
Since $\deg_{\mathcal F}(A\setminus\{x_1\})\ge3$,
there are at least two extensions other than $A$,
so $y_1$ can be chosen different from $y_2$.
The sets $B,A_1,\ldots,A_d$ are distinct and satisfy
\[
 \begin{aligned}
 \bigcap_{i=1}^d A_i=A\setminus D\ne\emptyset,\qquad
 B\cap\bigcap_{i=1}^d A_i=\emptyset,\qquad
 x_i\in B\cap\bigcap_{j\in[d]\setminus\{i\}}A_j
 \qquad(i\in[d]).
 \end{aligned}
\]
Moreover,
\[
 \left|B\cup\bigcup_{i=1}^d A_i\right|
 \le |A\cup B|+d=2k.
\]
They form a $d$-simplex-cluster, a contradiction.
Thus, $A\setminus D\in\mathcal H_D^*$.
Interchanging $A$ and $B$ gives
$B\setminus D\in\mathcal H_D^*$.

Every member of $\mathcal H_D\setminus\mathcal H_D^*$
now intersects every member of $\mathcal H_D$.
Since
\[
 k-d\ge1,\qquad
 n-d\ge2(k-d),\qquad
 (n-d)-(k-d)=r,
\]
we may apply Lemma~\ref{lem:markedbasic} to these
two families, after relabeling $[n]\setminus D$ as
$[n-d]$.
This gives
\begin{equation}\label{eq:transitionlink}
 |\mathcal H_D|
 \le\binom{n-d-1}{k-d-1}
   +\frac r{n-d}|\mathcal H_D^*|.
\end{equation}

Put $C=\binom{k-1}{d-1}$.
For $A\in\mathcal F$, let $h(A)$ be the number of
$d$-element subsets $D\subseteq A$ for which
$A\setminus D\in\mathcal H_D^*$.
Exactly
$
 \binom{k}{d}-\binom{k-p_1(A)}{d}
$
of these subsets meet $P_1(A)$, and exactly
$\binom{p_2(A)}{d}$ are contained in $P_2(A)$.
The two conditions are disjoint because
$P_1(A)\cap P_2(A)=\emptyset$.
By the definition of $\mathcal H_D^*$, we have
\[
 h(A)
 =\binom{k}{d}-\binom{k-p_1(A)}{d}
   +\binom{p_2(A)}{d}.
\]
For $0\le p\le k$, we have
\[
 \binom{k}{d}-\binom{k-p}{d}
 =\sum_{j=1}^{p}\binom{k-j}{d-1}\le pC.
\]
For $1\le p\le k$, we also have
\[
 \binom{p}{d}
 =\frac pd\binom{p-1}{d-1}\le\frac pd C,
\]
and this bound holds trivially when $p=0$.
Consequently,
\[
 h(A)\le C\left(p_1(A)+\frac{p_2(A)}d\right).
\]
The assumption $d(r-1)>2r$ is equivalent to
$1/d<(r-1)/(2r)$.
Summing over $A\in\mathcal F$ and using
\eqref{eq:transitionweight}, we obtain
\begin{equation}\label{eq:transitionmarks}
 \sum_{D\in\binom{[n]}{d}}|\mathcal H_D^*|
 =\sum_{A\in\mathcal F}h(A)
 \le C\mathcal W\le CM.
\end{equation}

Each member of $\mathcal F$ contains exactly
$\binom{k}{d}$ subsets of size $d$, so
$
 \sum_{D\in\binom{[n]}{d}}|\mathcal H_D|
 =\binom{k}{d}m.
$
Summing \eqref{eq:transitionlink} and using
\eqref{eq:transitionmarks}, we obtain
\[
 \begin{aligned}
 \binom{k}{d}m
 &\le\binom{n}{d}\binom{n-d-1}{k-d-1}
   +\frac r{n-d}\sum_{D\in\binom{[n]}{d}}
      |\mathcal H_D^*|\\
 &\le\binom{n}{d}\binom{n-d-1}{k-d-1}
   +\frac r{n-d}CM=\binom{k}{d}M.
 \end{aligned}
\]
Thus, $m\le M$.

If $m=M$, equality holds in all the preceding bounds.
Since $1/d<(r-1)/(2r)$, equality in
\eqref{eq:transitionmarks} forces $p_2(A)=0$
for every $A\in\mathcal F$.
The bound
\[
 \binom{k}{d}-\binom{k-p_1(A)}{d}\le Cp_1(A)
\]
is strict when $p_1(A)\ge2$, because
$\binom{k-2}{d-1}<C$.
Hence, $p_1(A)\le1$ for every $A\in\mathcal F$.
Equality in \eqref{eq:transitionmarks} also gives
$\mathcal W=M=m$.
It follows that
$
 \sum_{A\in\mathcal F}p_1(A)=m,
$
so $p_1(A)=1$ for every $A\in\mathcal F$.
Lemma~\ref{lem:rigidity} shows that $\mathcal F$
is a full star.
\end{proof}

For $n=2k-d+1$, Lemma~\ref{lem:transition}
applies whenever $d,r\ge3$, except at
$(d,k,n)=(3,5,8)$.
The following two lemmas handle $r=2$ and this
exceptional case.

\begin{lemma}\label{lem:ranktwo}
Let $n\ge5$ and
$\mathcal F\subseteq\binom{[n]}{n-2}$.
If $\mathcal F$ contains no $(n-3)$-simplex, then
\[
 |\mathcal F|\le\binom{n-1}{2},
\]
with equality only for a full star.
\end{lemma}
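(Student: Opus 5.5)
Because the sets have size $n-2$, the natural move is to pass to complements. Put $\mathcal G=\{[n]\setminus A:A\in\mathcal F\}\subseteq\binom{[n]}{2}$, viewed as a graph $G$ on $[n]$ with $|\mathcal G|=|\mathcal F|$ edges. The full star at $x$ corresponds to the complete graph $K_{n-1}$ on $[n]\setminus\{x\}$, which has $\binom{n-1}{2}$ edges. So it suffices to show two things: a graph with more than $\binom{n-1}{2}$ edges yields an $(n-3)$-simplex in $\mathcal F$, and a graph with exactly $\binom{n-1}{2}$ edges that avoids the corresponding configuration is $K_{n-1}$ plus an isolated vertex.

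\textbf{Step 1: translate the simplex condition.} Put $d=n-3$. Take $d+1=n-2$ distinct edges $e_1,\ldots,e_{n-2}$ with complements $A_i=[n]\setminus e_i$. The condition $\bigcap A_i=\emptyset$ says $\bigcup e_i=[n]$. The condition $\bigcap_{i\ne j}A_i\ne\emptyset$ for every $j$ says $\bigcup_{i\ne j}e_i\ne[n]$. Hence $n-2$ edges of $G$ form an $(n-3)$-simplex exactly when they cover all $n$ vertices and each edge $e_j$ covers some vertex not covered by the other edges; call such a vertex a private vertex of $e_j$. A spanning family of $n-2$ edges in which every edge has a private vertex is a disjoint union of stars, with each star spanning its vertices and contributing a private leaf. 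Counting $n$ vertices against $n-2$ edges, this means a spanning forest with exactly two components in which every edge is incident to a leaf. In other words, the edges form a \emph{star forest} with two components covering $[n]$, where a single edge counts as a star $K_{1,1}$ (each of its endpoints is private). So $\mathcal F$ contains no $(n-3)$-simplex if and only if $G$ contains no spanning subgraph that is a disjoint union of two stars.

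\textbf{Step 2: extremal argument.} Suppose $G$ has at least $\binom{n-1}{2}$ edges. Then $G$ misses at most $n-1$ of the $\binom n2$ pairs. I would look for vertices $u\ne v$ and a partition $[n]\setminus\{u,v\}=U\cup V$ with $U\subseteq N(u)$ and $V\subseteq N(v)$, where both $U$ and $V$ are nonempty or the degenerate case holds in which $uv$ is itself an edge. For instance, $G$ avoids two spanning stars exactly when no pair $u,v$ satisfies $N(u)\cup N(v)\supseteq[n]\setminus\{u,v\}$ with the partition admissible. Consider a vertex $u$ of maximum degree. If some vertex $w$ has small degree, say $\deg w\le1$, the edge count forces the rest of $G$ to be nearly complete. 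Concretely, let $s=n-1-\deg u$ be the number of non-neighbors of $u$ other than $u$ itself. Each vertex has at most $s$ non-neighbors, while the missing pairs total at most $n-1$, so $ns/2\le n-1$ and therefore $s\le1$ in general. If $s=0$, then $u$ is adjacent to everything. Picking any edge $vw$ with $u\notin\{v,w\}$, the star at $u$ on $[n]\setminus\{v,w\}$ together with $vw$ is a spanning star forest. Such an edge exists because $n\ge5$ and the edge count is large. If $s\le1$ for all vertices, the complement of $G$ has maximum degree at most $1$, and one again finds a star at some vertex covering all but a non-neighbor pair, then covers the remainder. This leaves the case where the complement contains a vertex of large degree. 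If that vertex is isolated in $G$ and the rest is $K_{n-1}$, we have the full star. Otherwise, a counting argument on the complement's degree sequence yields two vertices whose neighborhoods cover everything.

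\textbf{Main obstacle.} The real difficulty is the case analysis in Step 2, specifically controlling a complement graph with at most $n-1$ edges that is not a star centred at one vertex. I would organize it by the maximum degree $\Delta'$ of the complement $\overline G$. If $\Delta'=n-1$, then $\overline G$ is exactly a star at $x$, and $G=K_{n-1}$ plus the isolated vertex $x$; this is the equality case, and $G$ has no spanning forest because $x$ is isolated. If $\Delta'\le n-2$, choose $u$ with $\deg_{\overline G}(u)$ minimal, which is at most $1$ or $2$ by averaging. Then choose $v$ so that the few non-neighbors of $u$ lie in $N_G(v)$, using $n\ge5$ to guarantee room. Throughout, strictness must be tracked to show that $|\mathcal F|=\binom{n-1}{2}$ forces the star.
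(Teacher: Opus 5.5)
Your route is the same as the paper's. You pass to the graph $G$ of complements, note that $\overline G$ has at most $n-1$ edges, use averaging to find a vertex $u$ with few non-neighbours, and separate the case in which $G$ has an isolated vertex, which gives the bound and the full star at once. Your Step~1 characterization is correct, though only the easy direction is needed: a star at $u$ with leaf set $W$, $|W|=n-3$, together with a disjoint edge $zw$, complements to the $(n-3)$-simplex $[n]\setminus\{z,w\}$, $[n]\setminus\{u,x\}$ $(x\in W)$.

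\textbf{The gap.} The case you yourself call the main obstacle is never settled. In Step~2 you defer it to ``a counting argument on the complement's degree sequence'', which is never given. In your last paragraph you propose to choose $v$ with the non-neighbours of $u$ inside $N_G(v)$ ``using $n\ge5$ to guarantee room'', but give no reason why such a $v$ exists. With your hedge that $u$ has ``at most $1$ or $2$'' non-neighbours, you would need a common neighbour of two vertices, and you give no argument for that.

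\textbf{The missing step.} It is short and needs no counting.
\begin{itemize}
\item Averaging gives $\deg_{\overline G}(u)\le 2(n-1)/n<2$, so $u$ has at most one non-neighbour.
\item If $u$ has none, take an edge avoiding $u$. It exists because $\binom{n-1}{2}>n-1$, and this is where $n\ge5$ is needed.
\item If $u$ has exactly one non-neighbour $z$, there are two cases. Either $z$ is isolated in $G$, so $G$ lies in the complete graph on $[n]\setminus\{z\}$ and the bound holds with equality only for the full star centred at $z$. Or $z$ has a neighbour $w$, necessarily $w\ne u$. Then $u$ is adjacent to every vertex of $W=[n]\setminus\{u,z,w\}$, and the star at $u$ on $W$ together with $zw$ is your forbidden two-star configuration.
\end{itemize}
This dichotomy also settles the equality case, so no strictness needs to be tracked.

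\textbf{Two smaller slips.}
\begin{itemize}
\item Since $u$ has maximum degree, every vertex has \emph{at least} (not at most) $s$ non-neighbours. That is what justifies $ns/2\le n-1$.
\item There is no ``degenerate case in which $uv$ is itself an edge''. If $U$ or $V$ is empty, the edges either miss a vertex or form a spanning star with $n-1$ edges, and neither is an $(n-3)$-simplex.
\end{itemize}
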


\begin{proof}
Put
$
 \mathcal G=\{[n]\setminus A:A\in\mathcal F\}
 \subseteq\binom{[n]}{2},
$
and view $\mathcal G$ as the edge set of a graph on $[n]$.
We may assume $|\mathcal G|\ge\binom{n-1}{2}$,
as otherwise the desired bound holds strictly.

If a vertex $x$ is isolated, then
$
 \mathcal F\subseteq
 \left\{A\in\binom{[n]}{n-2}:x\in A\right\}.
$
This gives $|\mathcal F|\le\binom{n-1}{2}$,
with equality only for the full star with centre $x$.
Suppose that the graph has no isolated vertex.
Its average degree satisfies
\[
 \frac{2|\mathcal G|}{n}
 \ge\frac{(n-1)(n-2)}n
 =n-3+\frac2n>n-3.
\]
Thus, some vertex $v$ has degree at least $n-2$.
If $v$ has a nonneighbor $z$, then $z$ is not isolated,
so we can choose an edge $zw\in\mathcal G$.
Since $vz\notin\mathcal G$, we have $w\ne v$.
If $v$ is adjacent to every other vertex, exactly
$n-1$ edges contain $v$.
Since
$
 |\mathcal G|\ge\binom{n-1}{2}>n-1,
$
we can choose an edge $zw\in\mathcal G$
not containing $v$.

Put $W=[n]\setminus\{v,z,w\}$.
Since $v$ has degree at least $n-2$, its only possible
nonneighbor is $z$.
As $z\notin W$, $v$ is adjacent to every point of $W$.
Hence,
$
 A_0=[n]\setminus\{z,w\},\
 A_x=[n]\setminus\{v,x\},\ x\in W,
$
are $n-2$ distinct members of $\mathcal F$.
They satisfy
\[
 A_0\cap\bigcap_{x\in W}A_x=\emptyset,\qquad
 \bigcap_{x\in W}A_x=\{z,w\},\qquad
 A_0\cap\bigcap_{t\in W\setminus\{x\}}A_t
 =\{x\}\qquad(x\in W).
\]
Thus, they form an $(n-3)$-simplex, a contradiction.
\end{proof}

\begin{lemma}\label{lem:eight}
If $\mathcal F\subseteq\binom{[8]}{5}$ contains no
$3$-simplex, then $|\mathcal F|\le35$,
with equality only for a full star.
\end{lemma}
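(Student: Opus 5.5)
The plan is to rerun the proof of Lemma~\ref{lem:transition} with $(d,k,n,r)=(3,5,8,3)$. This case sits exactly on the boundary $d(r-1)=2r$, so the weight $\frac{r-1}{2r}=\frac13$ in $\mathcal W$ equals $1/d$. Since $4$ members of $\binom{[8]}{5}$ have union of size at most $8\le 2k=10$, here a $3$-simplex is the same as a $3$-simplex-cluster. So every local step of that proof applies verbatim:
\begin{itemize}
\item the degree count giving $\mathcal W\le M=35$;
\item the construction showing that disjoint pairs in $\mathcal H_D$ lie in $\mathcal H_D^*$;
\item Lemma~\ref{lem:markedbasic} applied in each link, with $n-d=5$ and $k-d=2$, so that $n-d\ge 2(k-d)$.
\end{itemize}
The only place where $d(r-1)>2r$ was used for the upper bound is the comparison $C(p_1+p_2/d)\le C\mathcal W$ summed over $A$. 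With $1/d=(r-1)/(2r)$ this becomes an equality of weights, so it still holds. Hence the chain gives $\binom53 m\le\binom53 M$, that is, $|\mathcal F|\le35$.

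For the equality case, assume $m=35$, so every inequality in the chain is tight. From the degree count, $a_j=0$ for every $j\notin\{1,2,4\}$: the $j=0$ and $j=3$ terms contribute positively and are not retained. So every $4$-set has degree $1$, $2$ or $4$. With $C=\binom42=6$, the bound $\binom{p_2}{3}\le\frac{p_2}{3}C$ is tight exactly when $p_2\in\{0,5\}$, since $\binom53=10=\frac53\cdot6$. The bound for $p_1$ is tight exactly when $p_1\le1$. Therefore every $A\in\mathcal F$ has either $p_2(A)=0$ and $p_1(A)\le1$, or $P_2(A)=A$ and $p_1(A)=0$. Let $\mathcal E$ be the family of sets of the second type. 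If $\mathcal E=\emptyset$, tightness of $\mathcal W=M=m$ forces $p_1(A)=1$ for all $A$, and Lemma~\ref{lem:rigidity} gives a full star.

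The main obstacle is excluding $A\in\mathcal E$. I would first try the argument of Lemma~\ref{lem:localrefined}. Suppose $A\in\mathcal E$ and two elements $a,b$ have $y_A(a)\ne y_A(b)$. Choose $D\ni a,b$ with $|D|=3$ and some $B$ with $A\cap B=D$. Then one of $a,b$ can play the role of $x_1$ with two distinct extensions, and the simplex construction from Lemma~\ref{lem:transition} produces a contradiction. If no such $B$ exists for any such $D$, I would use tightness of Lemma~\ref{lem:markedbasic} in those links instead, since equality there forces rigid (star-like) structure. In the remaining case all $y_A(a)$ equal a single $y$. Then all six $5$-subsets of $S=A\cup\{y\}$ lie in $\mathcal F$, and each $4$-subset of $S$ has degree exactly $2$. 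Next I would show that the $5$-sets of $\mathcal F$ meeting $S$ in at most four points are too few, using $a_j=0$ for $j\notin\{1,2,4\}$ and counting degrees of $4$-sets that meet $S$ in three points. This should give $|\mathcal F|<35$, a contradiction.

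If this local analysis becomes unwieldy, the fallback is to pass to complements $\mathcal G\subseteq\binom{[8]}{3}$. There a $3$-simplex in $\mathcal F$ becomes four triples whose union is everything and which avoid a common-union condition. I would then finish with a finite case analysis on the structure of $\mathcal G$, in the style of Lemma~\ref{lem:ranktwo}.
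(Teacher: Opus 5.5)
Your upper bound is correct, and it takes a different route from the paper. At $(d,k,n,r)=(3,5,8,3)$ the weights $1/d$ and $(r-1)/(2r)$ coincide, so the chain of Lemma~\ref{lem:transition} still closes. The paper instead passes to $\mathcal G\subseteq\binom{[8]}{3}$ and shows by a cyclic-interval argument that no cyclic ordering has six of its eight $3$-intervals in $\mathcal G$. Your tightness consequences are also right: $a_0=a_3=0$, $p_1(A)\le1$, $p_2(A)\in\{0,5\}$, and the case $\mathcal E=\emptyset$ via Lemma~\ref{lem:rigidity}.

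The gap is the exclusion of $\mathcal E$, which is the only new content of the equality case, and you only sketch it. Specifically:
\begin{itemize}
\item The simplex argument needs some $B\in\mathcal F$ with $A\cap B=D$. For the branch where none exists, you offer only the claim that tightness of Lemma~\ref{lem:markedbasic} forces ``rigid (star-like) structure''. This is never made precise, and in these links it is not what happens (see below).
\item The assertion that every $4$-subset of $A\cup\{y\}$ has degree exactly $2$ is unjustified for the subsets containing $y$.
\item The final count is only said to ``should give'' $|\mathcal F|<35$.
\item The fallback is not an argument.
\end{itemize}
As written, the equality characterization is not proved.

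The missing step is the exact consequence of tightness of Lemma~\ref{lem:markedbasic} in each link. For every $D\in\binom{[8]}{3}$ equality gives $|\mathcal H_D|=4+\frac35|\mathcal H_D^*|$, so by integrality $|\mathcal H_D^*|\in\{0,5,10\}$. The value $5$ is impossible. It would give $|\mathcal H_D|=7$ and $|\mathcal G_D|=2$, and the two members of $\mathcal G_D$ must intersect, say $\{u,v\}$ and $\{u,w\}$. But only five $2$-subsets of $[8]\setminus D$ meet both, contradicting $|\mathcal H_D|=7$. Hence $\mathcal H_D^*\ne\emptyset$ forces $\mathcal H_D=\binom{[8]\setminus D}{2}$, the complete link rather than a star.

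Now let $A\in\mathcal E$ and $D\in\binom{A}{3}$. Then $D\subseteq P_2(A)$ gives $A\setminus D\in\mathcal H_D^*$, so every $5$-set containing $D$ lies in $\mathcal F$. For $a\in A\setminus D$, all four $5$-supersets of $A\setminus\{a\}$ contain $D$. This gives $\deg_{\mathcal F}(A\setminus\{a\})=4$, contradicting $a\in P_2(A)$. So $\mathcal E=\emptyset$, and with this step supplied your proof is complete.

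For comparison, the paper settles equality differently. An exchange argument on cyclic orderings shows $\mathbf 1_{\mathcal G}(R)=a+\sum_{z\in R}b_z$, and Lemma~\ref{lem:boolean} then finishes.
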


\begin{proof}
Put
$
 \mathcal G=\{[8]\setminus A:A\in\mathcal F\}
 \subseteq\binom{[8]}{3}.
$
Fix a cyclic ordering $(x_0,\ldots,x_7)$ of $[8]$, and put
$
 I_i=\{x_i,x_{i+1},x_{i+2}\},
$
with indices read modulo $8$.
The eight intervals are partitioned into
\[
 \{I_0,I_4\},\quad
 \{I_1,I_5\},\quad
 \{I_2,I_6\},\quad
 \{I_3,I_7\}.
\]

Suppose that at least six of the eight intervals
belong to $\mathcal G$.
Then
$
 \sum_{i=0}^3
 \left|\mathcal G\cap\{I_i,I_{i+4}\}\right|
 \ge6>2+1+1+1.
$
Each summand is at most two, so at least two
summands must equal two.
Hence, there exist $0\le i<j\le3$ such that
$
 I_i, I_{i+4}, I_j, I_{j+4}\in\mathcal G.
$
Renumber the cyclic positions so that position $i$
becomes $0$, and put $h=j-i$.
The four intervals are then
\[
 I_0,\quad I_h,\quad I_4,\quad I_{h+4},
 \qquad 1\le h\le3.
\]
Since $1\le h\le3$, we have
$
 I_0\cup I_4=[8]\setminus\{x_3,x_7\},
  x_3\in I_h, x_7\in I_{h+4}.
$
Hence, $I_0\cup I_h\cup I_4\cup I_{h+4}=[8]$.
Put $A_i=[8]\setminus I_i$ for
$i\in\{0,h,4,h+4\}$.
These four sets belong to $\mathcal F$ and satisfy
\[
 A_0\cap A_h\cap A_4\cap A_{h+4}
 =[8]\setminus(I_0\cup I_h\cup I_4\cup I_{h+4})
 =\emptyset.
\]
The definitions of the intervals also give
\[
 \begin{aligned}
 A_h\cap A_4\cap A_{h+4}&=\{x_{h-1}\},
 &\qquad A_0\cap A_4\cap A_{h+4}&=\{x_3\},\\
 A_0\cap A_h\cap A_{h+4}&=\{x_{h+3}\},
 &\qquad A_0\cap A_h\cap A_4&=\{x_7\}.
 \end{aligned}
\]
Thus, $A_0,A_h,A_4,A_{h+4}$  form a $3$-simplex in $\mathcal F$,
a contradiction.

Therefore, every cyclic ordering has at most five intervals
in $\mathcal G$.
Let $\Omega$ be the set of cyclic orderings of $[8]$.
Then $|\Omega|=7!$.
Each $R\in\mathcal G$ occurs as an interval
in exactly $3!5!$ cyclic orderings.
Double counting gives
$
 3!5!\,|\mathcal G|
=\sum_{\sigma\in\Omega}
   \left|\{R\in\mathcal G:
   R\text{ is an interval in }\sigma\}\right|\le5|\Omega|=5\cdot7!.
$
Hence,
\[
 |\mathcal F|=|\mathcal G|
 \le\frac{5\cdot7!}{3!5!}=35.
\]

Suppose that equality holds.
Every cyclic ordering then has exactly five intervals
in $\mathcal G$.
For distinct $x,y\in[8]$, let $S=\{s_1,s_2\}$ and
$T=\{t_1,t_2\}$ be disjoint members of
$\binom{[8]\setminus\{x,y\}}{2}$.
Consider the cyclic ordering
\[
 (s_1,s_2,x,y,t_1,t_2,z_1,z_2),
\]
where $z_1,z_2$ are the remaining two points.
Exchanging $x$ and $y$ replaces $S\cup\{x\}$ and
$T\cup\{y\}$ by $S\cup\{y\}$ and $T\cup\{x\}$,
respectively. The other six three-element sets are
common to both orderings.
Let $t$ be the number of these six sets belonging
to $\mathcal G$, and let $g=\mathbf1_{\mathcal G}$.
Since each ordering has exactly five intervals
in $\mathcal G$, we obtain
\[
 g(S\cup\{x\})+g(T\cup\{y\})
 =5-t=g(S\cup\{y\})+g(T\cup\{x\}).
\]

Let $S,T\in\binom{[8]\setminus\{x,y\}}{2}$.
Since $|\{x,y\}\cup S\cup T|\le6$, we can choose
$
 U\in\binom{[8]\setminus(\{x,y\}\cup S\cup T)}{2}.
$
Applying the preceding identity to $(S,U)$ and $(T,U)$ gives
\[
 g(S\cup\{x\})-g(S\cup\{y\})
 =g(U\cup\{x\})-g(U\cup\{y\})=g(T\cup\{x\})-g(T\cup\{y\}).
\]
Thus, $g(S\cup\{x\})-g(S\cup\{y\})$ is independent
of $S$. Denote its common value by $c_{xy}$.
Put $b_8=0$ and $b_x=c_{x8}$ for $x\in[7]$.
For distinct $x,y\in[7]$, choose
$S\in\binom{[8]\setminus\{x,y,8\}}{2}$.
Then
\[
g(S\cup\{x\})-g(S\cup\{y\})
 =\bigl(g(S\cup\{x\})-g(S\cup\{8\})\bigr)
   -\bigl(g(S\cup\{y\})-g(S\cup\{8\})\bigr).
\]
It follows that $c_{xy}=b_x-b_y$ for distinct $x,y\in[7]$.
For $x\in[7]$, the definitions also give
$
 c_{x8}=b_x=b_x-b_8,
 c_{8x}=-c_{x8}=b_8-b_x.
$
Thus, $c_{xy}=b_x-b_y$ for all distinct $x,y\in[8]$.
Let $R\in\binom{[8]}{3}$, $x\in R$, and
$y\in[8]\setminus R$.
Put $R'=(R\setminus\{x\})\cup\{y\}$.
Taking $S=R\setminus\{x\}$ in the definition of $c_{xy}$ gives
\begin{equation}\label{ad15}
 g(R)-g(R')=c_{xy}=b_x-b_y.
\end{equation}

Let $R,T\in\binom{[8]}{3}$.
For $R\ne T$, write
$
 R\setminus T=\{u_1,\ldots,u_s\}
$ and
$
 T\setminus R=\{v_1,\ldots,v_s\}, s\ge1.
$
Put $R_0=R$ and
$
 R_i=(R_{i-1}\setminus\{u_i\})\cup\{v_i\}, 1\le i\le s.
$
Then $R_s=T$.
At each step, $u_i\in R_{i-1}$ and $v_i\notin R_{i-1}$.
The  equality \eqref{ad15} gives
\[
 \begin{aligned}
 g(R)-g(T)
 &=\sum_{i=1}^s\bigl(g(R_{i-1})-g(R_i)\bigr)=\sum_{i=1}^s(b_{u_i}-b_{v_i})\\
 &=\sum_{z\in R}b_z-\sum_{z\in T}b_z.
 \end{aligned}
\]
This identity also holds when $R=T$.
Hence, $g(R)-\sum_{z\in R}b_z$ is constant.
Denote its value by $a$. Then
\[
 g(R)=a+\sum_{z\in R}b_z
 \qquad\text{for every }R\in\binom{[8]}{3}.
\]

Since $0<|\mathcal G|=35<56$, the function $g$ is not constant.
By Lemma~\ref{lem:boolean}, there is $x\in[8]$ such that
$
 \mathcal G\subseteq\{R\in\binom{[8]}{3}:x\in R\}
$
or
$
 \mathcal G\subseteq\{R\in\binom{[8]}{3}:x\notin R\}.
$
The first case gives $|\mathcal G|\le\binom{7}{2}=21$,
contrary to $|\mathcal G|=35$.
In the second case, the containing family has
$\binom{7}{3}=35$ members.
Hence,
$
 \mathcal G=\{R\in\binom{[8]}{3}:x\notin R\}.
$
Taking complements gives
$
 \mathcal F
 =\{A\in\binom{[8]}{5}:x\in A\}.
$
\end{proof}

\subsection{Proof of Theorem~\ref{thm:main}}

We recall the two known results used in the induction.

\begin{theorem}[Currier~\cite{Currier}]
\label{thm:Currier}
Let $d\ge3$, $k\ge d+1$, and $n\ge2k-d+2$.
If $\mathcal F\subseteq\binom{[n]}{k}$ contains no
$d$-simplex-cluster, then
\[
 |\mathcal F|\le\binom{n-1}{k-1}.
\]
Equality holds if and only if there exists $x\in[n]$ such that
$
 \mathcal F=\{A\in\binom{[n]}{k}:x\in A\}.
$
\end{theorem}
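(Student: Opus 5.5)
The plan is to derive Theorem~\ref{thm:Currier} almost entirely from Lemma~\ref{lem:transition}, which already carries the local-counting argument. Only one small exceptional configuration then needs separate treatment.

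\textbf{Step 1: check the range of Lemma~\ref{lem:transition}.} Put $r=n-k$. The hypothesis $n\ge2k-d+2$ gives $r\ge k-d+2\ge3$, using $k\ge d+1$. It also gives $n\ge2k-d$, so the only hypothesis of Lemma~\ref{lem:transition} left to check is $d(r-1)>2r$. This is equivalent to $r(d-2)>d$, that is, $r>d/(d-2)$.
\begin{itemize}
\item For $d\ge5$ we have $d/(d-2)<2$, so the condition holds since $r\ge3$.
\item For $d=4$ we need $r>2$, which holds since $r\ge3$.
\item For $d=3$ we need $r\ge4$.
\end{itemize}
So Lemma~\ref{lem:transition} gives both the bound $|\mathcal F|\le\binom{n-1}{k-1}$ and the equality characterization, except when $d=3$ and $r=3$. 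In that case $3=r\ge k-d+2=k-1$ forces $k\le4$, and with $k\ge d+1=4$ this leaves exactly $(d,k,n)=(3,4,7)$.

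\textbf{Step 2: the exceptional case $(d,k,n)=(3,4,7)$.} Here any four $4$-subsets of $[7]$ have union of size at most $7\le2k=8$, so every $3$-simplex is automatically a $3$-simplex-cluster. The hypothesis therefore says that $\mathcal F\subseteq\binom{[7]}{4}$ contains no $3$-simplex. This is the case $k=d+1$ of Chv\'atal's theorem (valid for $n\ge k+2=6$), which gives $|\mathcal F|\le\binom{6}{3}=20$ with equality only for a full star.

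To keep the proof self-contained, I would instead imitate Lemma~\ref{lem:eight}.
\begin{enumerate}
\item Pass to the complementary family $\mathcal G\subseteq\binom{[7]}{3}$.
\item Show that every cyclic ordering of $[7]$ contains a bounded number of intervals of $\mathcal G$. Note that $20/35=4/7$, so the target is at most four of the seven intervals. The argument is by case analysis: five intervals of $\mathcal G$ in one cyclic ordering should contain four whose complements form a $3$-simplex.
\item Average over all cyclic orderings of $[7]$ to get $|\mathcal G|\le20$.
\item For equality, use transposition identities as in Lemma~\ref{lem:eight} to write $\mathbf1_{\mathcal G}$ as an affine function $a+\sum_{i\in R}b_i$. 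Then Lemma~\ref{lem:boolean} shows that $\mathcal G$ consists of all $3$-sets avoiding some point $x$, so $\mathcal F$ is the full star at $x$.
\end{enumerate}

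\textbf{Main obstacle.} The only real work is the interval case analysis in Step 2. For odd $n=7$ the intervals do not split into disjoint antipodal pairs as they did for $n=8$. So I would first try to find a simplex among any five intervals by hand. If that fails, I would simply invoke Chv\'atal's result for this single configuration.
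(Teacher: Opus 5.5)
The paper gives no proof of this statement. It imports the result from Currier and uses it as a black box for $n\ge 2k-d+2$. Your route is genuinely different, and it is correct via the citation to Chv\'atal.

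Your Step~1 is right. Lemma~\ref{lem:transition} as stated only needs $n\ge 2k-d$, with no upper bound on $n$. In Currier's range one has $r\ge k-d+2\ge 3$, so $d(r-1)>2r$ fails only for $d=r=3$, which forces $(d,k,n)=(3,4,7)$. Since $7<2k$, that case is the $k=d+1$, $n\ge k+2$ instance of the simplex conjecture. This is covered by Chv\'atal's theorem as recorded in the introduction; Huang's theorem would also do.

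What your route buys is that the paper's reliance on Currier's full theorem shrinks to a single classical configuration. The local counting the paper already proves does all the remaining work. The paper's route is shorter on the page, but it leaves the whole range $n\ge 2k-d+2$ to an external result.

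The self-contained alternative you sketch for $(3,4,7)$ cannot work, because the interval claim in your Step~2 is false. Take four distinct cyclic $3$-intervals of $[7]$, and let $g_1,\dots,g_4\ge 1$ be the cyclic gaps between consecutive starting points, so $\sum g_i=7$. Their complements form a $3$-simplex exactly when the intervals cover $[7]$ and each interval contains a point lying in no other.

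Now suppose the gaps on the two sides of an interval satisfy $g_{i-1}+g_i\le 3$. Then its predecessor covers its first $3-g_{i-1}$ points and its successor covers its last $3-g_i$ points, so nothing is private. A simplex therefore needs $g_{i-1}+g_i\ge 4$ for every $i$. Summing over $i$ gives $14\ge 16$, a contradiction.

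Consequences for your plan:
\begin{itemize}
\item The seven cyclic $4$-intervals of $[7]$ contain no $3$-simplex.
\item A single cyclic ordering can have all seven of its intervals in $\mathcal G$.
\item Averaging over cyclic orderings therefore gives only the trivial bound.
\end{itemize}
For comparison, in Lemma~\ref{lem:eight} the configuration $I_0,I_h,I_4,I_{h+4}$ has all consecutive gap sums equal to $4$. That is possible only because $2\cdot 8=16$.

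So you do need the citation here. Alternatively, note that at $(3,4,7)$ one has $1/d=(r-1)/(2r)$. The proof of Lemma~\ref{lem:transition} uses the strict inequality only in the equality analysis, so it still yields $|\mathcal F|\le 20$. Only uniqueness of the extremal family would then need a separate argument, since members with $P_2(A)=A$ are no longer excluded.
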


\begin{theorem}[Mubayi--Verstra\"ete~\cite{MV}]
\label{thm:MV}
Let $k\ge3$ and $n\ge3k/2$.
If $\mathcal F\subseteq\binom{[n]}{k}$ contains no
$2$-simplex, then
\[
 |\mathcal F|\le\binom{n-1}{k-1}.
\]
Equality holds if and only if there exists $x\in[n]$ such that
$
 \mathcal F=\{A\in\binom{[n]}{k}:x\in A\}.
$
\end{theorem}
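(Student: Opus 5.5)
This is a result we quote from~\cite{MV}; it is not proved in this paper. Still, here is how I would reprove it using the tools above. Note first that a family with no $2$-simplex has no $2$-simplex-cluster. So for $k\ge4$ and $n>2k$, Theorem~\ref{thm:triangle} already gives both the bound and the equality case. The remaining work is the range $3k/2\le n\le2k$, together with $k=3$. For these I would not use the link decomposition. Instead I would use a uniform argument based on Bollob\'as's set-pairs theorem, with Lemma~\ref{lem:rigidity} handling equality.

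\textbf{Reduction to set-pairs.} For each $A\in\mathcal F$, I want to choose a point $x_A\in A$ such that $S_A=A\setminus\{x_A\}$ lies in no other member of $\mathcal F$, that is, $\deg_{\mathcal F}(S_A)=1$. Put $T_A=[n]\setminus A$. The pairs $(S_A,T_A)$ have sizes $k-1$ and $n-k$. Each pair is disjoint, and for $A\ne B$ we have $S_A\cap T_B\neq\emptyset$, because $S_A\not\subseteq B$. Bollob\'as's theorem then yields $|\mathcal F|\le\binom{n-1}{k-1}$. If equality holds, every member has a $(k-1)$-subset of degree one. I would also expect the equality case of the set-pairs inequality (or the counting of pairs $(T,C)$ in the proof of Lemma~\ref{lem:rigidity}) to force exactly one such subset. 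Lemma~\ref{lem:rigidity} then shows that $\mathcal F$ is a full star.

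\textbf{Main step (the obstacle).} Not every member of a triangle-free family need have a private $(k-1)$-subset. So the real work is to show that, among triangle-free families of maximum size, one may assume every member does. My plan is to take a member $A$ all of whose $(k-1)$-subsets have degree at least $2$. Choose $a,b\in A$ and extensions $C_a=(A\setminus\{a\})\cup\{y_a\}$ and $C_b=(A\setminus\{b\})\cup\{y_b\}$ in $\mathcal F$, as in the proof of Lemma~\ref{lem:localrefined}. Any $B\in\mathcal F$ with $A\cap B=\{a,b\}$ would then form a triangle with $C_a,C_b$. Iterating over all pairs, and then over smaller intersections, this says that members of $\mathcal F$ cannot meet $A$ only inside the non-private part. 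I would then try to compress or replace $A$ by a set through a fixed point without creating a triangle and without decreasing $|\mathcal F|$. Finally I would show that at the end of this process every member has a private shadow.

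The condition $n\ge3k/2$ should enter exactly here. It guarantees that three sets of the form $A\setminus\{a\}\cup\{y\}$ and $B$ can be chosen pairwise intersecting with empty common intersection. I expect this replacement argument, including the bookkeeping that the equality case survives it, to be the hardest and most delicate part. If it fails, the fallback is simply to invoke~\cite{MV} as stated.
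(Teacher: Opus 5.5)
\textbf{There is a genuine gap: the proposal does not prove the statement.} Your preliminary reductions are sound. A triangle-free family contains no $2$-simplex-cluster, so Theorem~\ref{thm:triangle} covers $k\ge4$, $n>2k$. If every member has a $(k-1)$-subset of degree one, Bollob\'as's theorem gives $|\mathcal F|\le\binom{n-1}{k-1}$. At equality, the pair count from the proof of Lemma~\ref{lem:rigidity} gives $a_1\le|\mathcal F|\le\sum_{A\in\mathcal F}p_1(A)=a_1$, so each member has exactly one such subset and Lemma~\ref{lem:rigidity} applies.

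The problem is that the whole theorem sits in your ``main step'', which is a plan rather than an argument. Members with no private $(k-1)$-subset really occur. For example, $\binom{Y}{k}$ with $|Y|=k+1$ is triangle-free, and for $A=Y\setminus\{u\}$ every $A\setminus\{a\}$ has degree two with the same second point $y_A(a)=u$.

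\begin{itemize}
\item That is exactly the case $y_a=y_b$, in which $B,C_a,C_b$ is \emph{not} a triangle. Your local step fails there; this is the configuration $\mathcal E$ for which the paper needs Lemma~\ref{lem:markednew}.
\item Even when $y_a\ne y_b$, the triangle only excludes members meeting $A$ in exactly two points. If $|A\cap B|\ge3$ then $B\cap C_a\cap C_b\ne\emptyset$, and if $|A\cap B|=1$ nothing is excluded. So ``iterating over smaller intersections'' yields nothing.
\item The compression step has no rule, and the usual shifts do not preserve triangle-freeness. Take pairwise disjoint $(k-2)$-sets $P_{13},P_{14},P_{23},P_{24}$ outside $[4]$. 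The family $\{1,3\}\cup P_{13}$, $\{1,4\}\cup P_{14}$, $\{2,3\}\cup P_{23}$, $\{2,4\}\cup P_{24}$ is triangle-free. Replacing $3$ by $1$ produces $\{1,2\}\cup P_{23}$, which forms a triangle with $\{1,4\}\cup P_{14}$ and $\{2,4\}\cup P_{24}$.
\item Your account of where $n\ge3k/2$ enters is also off, since the triangle $B,C_a,C_b$ needs only $k\ge3$. The hypothesis matters because for $n<3k/2$ any three $k$-sets share at least $3k-2n>0$ points. Then $\binom{[n]}{k}$ itself is triangle-free and the bound is false. A proof must use the hypothesis substantively, and yours never does.
\end{itemize}

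\textbf{How the paper treats this statement.} The paper gives no proof. It imports the result from Mubayi--Verstra\"ete as the $d=2$ input for $n\le 2k$ and for $k=3$, so your fallback is exactly what the paper does.

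\textbf{A self-contained route for part of the range.} The machinery of Section~\ref{sec:completion} is a better tool here than set-pairs. For $3k/2\le n\le 2k-2$, pass to $\mathcal G=\{[n]\setminus A:A\in\mathcal F\}$, with $r=n-k\ge2$ and $s=2k-n\ge2$. Fix $E\in\mathcal G$ and put $X=[n]\setminus E$. Two disjoint sets $X\setminus B_1$, $X\setminus B_2$ with $B_1,B_2\in\mathcal G_E$ would make $X$, $[n]\setminus B_1$, $[n]\setminus B_2$ a triangle. Hence $\{X\setminus B:B\in\mathcal G_E\}$ is an intersecting $s$-uniform family on the $k$-set $X$. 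The hypothesis $n\ge3k/2$ is exactly $k\ge2s$, so the Erd\H{o}s--Ko--Rado theorem gives $|\mathcal G_E|\le\binom{k-1}{s-1}$. Lemmas~\ref{lem:kneser} and~\ref{lem:boolean} then yield the bound and the equality case exactly as in the proof of Theorem~\ref{thm:main}. The cases $n\in\{2k-1,2k\}$, and $k=3$ with $n\ge7$, would still need separate arguments.
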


We now combine the preceding results to prove
Theorem~\ref{thm:main}.

\begin{proof}[Proof of Theorem~\ref{thm:main}]
We proceed by induction on $d$.
For $d=2$, we have $n\ge3k/2$.
When $n\le2k$, every $2$-simplex is a $2$-simplex-cluster.
For $k=3$, a $2$-simplex $\{A,B,C\}$ has empty total
intersection and nonempty pairwise intersections.
Hence,
\[
 |A\cup B\cup C|
 =9-|A\cap B|-|A\cap C|-|B\cap C|\le6=2k.
\]
Thus, every $2$-simplex is a $2$-simplex-cluster
when $n\le2k$ or $k=3$.
Since $\mathcal F$ contains no $2$-simplex-cluster,
it contains no $2$-simplex in these cases.
Theorem~\ref{thm:MV} then gives the required bound
and equality characterization.
The remaining case $k\ge4$ and $n>2k$ follows
from Theorem~\ref{thm:triangle}.

Let $d\ge3$, and assume that the theorem holds for $d-1$.
For $n\ge2k-d+2$, the conclusion follows from
Theorem~\ref{thm:Currier}.

Suppose next that $n=2k-d+1$, and put $r=n-k$.
Then $k=r+d-1$.
The assumptions $n\ge k(d+1)/d$  and $k>d$ give
$r\ge k/d>1$, so $r\ge2$.
Here $n<2k$, so every $d$-simplex is a
$d$-simplex-cluster.
Since $\mathcal F$ contains no $d$-simplex-cluster,
it contains no $d$-simplex.
If $r=2$, then
$
 n=d+3\geq 6
$
and
$ k=d+1=n-2.$
Since $d=n-3$, Lemma~\ref{lem:ranktwo} applies.
For $r\ge3$ and $(d,r)\ne(3,3)$, we have
$d(r-1)>2r$, so Lemma~\ref{lem:transition} applies.
The remaining case $d=r=3$ gives
$(d,k,n)=(3,5,8)$.
Here $\mathcal F$ contains no $3$-simplex,
so Lemma~\ref{lem:eight} applies.

It remains to treat $n\le2k-d$.
Put $r=n-k$ and $s=k-r$.
Since $n\ge k(d+1)/d$ and $k>d$, we have
$
 r=n-k\ge k/d>1.
$
Thus, $r\ge2$.
Since $n\le2k-d$, we also have
$
 s=k-r=2k-n\ge d\geq 3.
$
Hence,
$
 k=r+s\geq r+3, n=k+r=2r+s\geq 2r+3.
$
Since $n<2k$, the family $\mathcal F$ contains
no $d$-simplex.
Define
\[
 \mathcal G=\{[n]\setminus A:A\in\mathcal F\}
 \subseteq\binom{[n]}{r}.
\]
For $E\in\mathcal G$, put $X=[n]\setminus E$ and
\[
 \mathcal G_E
 =\{B\in\mathcal G:B\cap E=\emptyset\}
 \subseteq\binom{X}{r}.
\]

We claim that the sets $X\setminus B$,
$B\in\mathcal G_E$, form an $s$-uniform family
containing no $(d-1)$-simplex.
Suppose that $B_1,\ldots,B_d\in\mathcal G_E$
are such that $X\setminus B_1,\ldots,X\setminus B_d$
form a $(d-1)$-simplex.
Put
\[
 A_0=X,\qquad
 A_i=[n]\setminus B_i\quad(i\in[d]).
\]
These are distinct members of $\mathcal F$.
Since $B_i\subseteq X$, we have
$A_i=E\cup(X\setminus B_i)$.
The simplex conditions give
\[
 \begin{aligned}
 \bigcap_{i=0}^d A_i
 &=\bigcap_{i=1}^d(X\setminus B_i)=\emptyset,\quad
 \bigcap_{i=1}^d A_i
 =E\ne\emptyset,\\
 \bigcap_{0\le j\le d, j\ne i}A_j
 &=\bigcap_{j\in[d]\setminus\{i\}}(X\setminus B_j)
 \ne\emptyset
 \qquad(i\in[d]).
 \end{aligned}
\]
Thus, $A_0,\ldots,A_d$ form a $d$-simplex,
contradicting the assumption on $\mathcal F$.

In particular, the family
$\{X\setminus B:B\in\mathcal G_E\}$ contains no
$(d-1)$-simplex-cluster.
Its members have size $s\ge d$, and its ground set
$X$ has size $k$.
The original parameter condition gives $(d+1)r\ge n$,
and
\[
 (d-1)k-ds=dr-k=(d+1)r-n\ge0.
\]
Hence, $k\ge ds/(d-1)$, so the induction hypothesis
applies and gives
\[
 |\mathcal G_E|
 \le\binom{k-1}{s-1}
 =\binom{k-1}{r}
 =\frac{k-r}{k}\binom{k}{r}.
\]

Put
\[
 m=|\mathcal G|=|\mathcal F|,\qquad
 N=\binom{n}{r},\qquad
 \alpha=\frac mN,\qquad
 D=\binom{k}{r}.
\]
If $m=0$, the desired bound is immediate.
Assume $m>0$.
The number of ordered disjoint pairs in $\mathcal G$
is $\sum_{E\in\mathcal G}|\mathcal G_E|$.
Applying Lemma~\ref{lem:kneser} and summing the
preceding upper bound over $E\in\mathcal G$, we obtain
\[
 mD\left(\frac nk\alpha-\frac rk\right)
 \le\sum_{E\in\mathcal G}|\mathcal G_E|
 \le mD\frac{k-r}{k}.
\]
Dividing by $mD$ gives
\[
 \frac nk\alpha-\frac rk\le\frac{k-r}{k},
\]
which reduces to $\alpha\le k/n$.
Consequently,
\[
 |\mathcal F|=m
 \le\frac kn\binom{n}{r}
 =\binom{n-1}{k-1}.
\]

If $|\mathcal F|=\binom{n-1}{k-1}$, then
$\alpha=k/n$, and equality holds throughout the
bound on the number of ordered disjoint pairs.
The equality assertion in Lemma~\ref{lem:kneser}
shows that the indicator function of $\mathcal G$
has the form stated in Lemma~\ref{lem:boolean}.
Since $0<|\mathcal G|<N$, its indicator function is not constant.
By Lemma~\ref{lem:boolean}, there exists $x\in[n]$ such that
$
 \mathcal G\subseteq\{R\in\binom{[n]}{r}:x\in R\}
$
or
$
 \mathcal G\subseteq\{R\in\binom{[n]}{r}:x\notin R\}.
$
The first case gives
\[
 |\mathcal G|
 \le\binom{n-1}{r-1}
 =\frac rnN
 <\frac knN
 =|\mathcal G|,
\]
a contradiction.
In the second case, the containing family has
$
 \binom{n-1}{r}
 =\frac knN
 =|\mathcal G|
$
members, so the inclusion is an equality.
Hence,
$
 \mathcal G
 =\{R\in\binom{[n]}{r}:x\notin R\}.
$
It follows that
$
 \mathcal F
 =\{A\in\binom{[n]}{k}:x\in A\}.
$
Conversely, a full star has $\binom{n-1}{k-1}$ members.
Every $d+1$ of its members have nonempty total intersection,
so it contains no $d$-simplex-cluster.
\end{proof}

\section*{Declaration of competing interest}
The authors declare that they have no competing interests.

\section*{Data availability}
No data was used for the research described in the article.

\section*{Acknowledgments}
The authors used ChatGPT as an auxiliary tool to discuss
mathematical details, assist in developing and refining proofs,
and improve the clarity and organization of the manuscript.
The authors wrote and revised the manuscript, carefully checked
the final proofs, and take full responsibility for its content.

\end{document}